\newcommand{\interior}[1]{\ensuremath{\mathrm{Int}}(#1)}
\def \N{\mathbb{N}}
\def \bR{\mathbb{R}}
\def \cB{\mathcal{B}}
\def \bN{\mathbb{N}}
\def \bZ{\mathbb{Z}}
\DeclareMathOperator{\Int}{int}
\newtheorem{Th}{Theorem}[section]
\newtheorem{Thm}[Th]{Theorem}
\newtheorem*{Def}{Definition}
\newtheorem{Cor}[Th]{Corollary}
\newtheorem{Prop}[Th]{Proposition}
\newtheorem{Lem}[Th]{Lemma}
\newtheorem*{Lem*}{Lemma}
\newtheorem*{thmA}{Theorem A}
\newtheorem*{thmB}{Theorem B}
\newtheorem*{thmC}{Theorem C}
\newtheorem*{thmD}{Theorem D}
\newtheorem*{thmE}{Theorem E}
\begin{document}
\title[Interpreting the MSO theory of $(\bN,+1)$ in expansions of $(\bR,<)$]{Interpreting the monadic second order theory of one successor in expansions of the real line}
\thanks{The first author was partially supported by NSF grant DMS-1300402. The second author was supported by the European Research Council under the European Union's Seventh Framework Programme (FP7/2007-2013) / ERC Grant agreement no.\ 291111/ MODAG}

\subjclass[2010]{Primary 03C45  Secondary 03C64, 03D05,  28A80, 54F45}
\keywords{$\textrm{NTP}_2$, neostability, tame geometry, Cantor set, expansions of the real line, monadic second order theory of one successor, B\"uchi, metric dimensions, monotonicity theorem}

\author{Philipp Hieronymi}
\address
{Department of Mathematics\\University of Illinois at Urbana-Champaign\\1409 West Green Street\\Urbana, IL 61801}
\email{phierony@illinois.edu}
\urladdr{http://www.math.uiuc.edu/\textasciitilde phierony}

\author{Erik Walsberg}
\address
{Department of Mathematics\\University of Illinois at Urbana-Champaign\\1409 West Green Street\\Urbana, IL 61801}
\email{erikw@math.ucla.edu}
\date{\today}
\begin{abstract}
We give sufficient conditions for a first order expansion of the real line to define the standard model of the monadic second order theory of one successor. Such an expansion does not satisfy any of the combinatorial tameness properties defined by Shelah, such as NIP or even $\textrm{NTP}_2$. We use this to deduce the first general results about definable sets in $\textrm{NTP}_2$ expansions of $(\bR,<,+)$.
\end{abstract}
\maketitle

The goal of this paper is to bring together the study of combinatorial tameness properties of first order structures initiated by Morley and Shelah (\emph{neostability}) and the study of geometric tameness properties of expansions of the real line $(\bR,<)$ championed by Miller (\emph{tame geometry}, see \cite{Miller-tame}). Let $\mathcal B$ be the two-sorted (first order) structure $(\mathcal{P}(\mathbb{N}), \mathbb{N}, \in,+1)$ where $\mathcal{P}(\mathbb{N})$ is the power set of $\mathbb{N}$ and $+1$ is the successor function on $\mathbb N$. The theory of this structure is essentially the monadic second order theory of $(\bN,+1)$. While B{\"u}chi showed in his landmark paper~\cite{buchi} that $\cB$ admits quantifier elimination in a suitable language and its theory is decidable, $\cB$ obviously does not enjoy any Shelah-style combinatorial tameness properties, such as NIP or $\textrm{NTP}_2$ (see e.g. Simon \cite{Simon-Book} for definitions). Therefore any structure that defines an isomorphic copy of $\cB$, can not satisfy any of those properties, and has to be considered complicated or \emph{wild} in this framework of combinatorial tameness. Here we study the consequences of the non-definability of a copy of $\cB$ in an expansion of $(\bR,<)$ on the geometric tameness of definable sets in this expansion. Our results are new even when the assumption \emph{``does not define an isomorphic copy of $\cB$''} is replaced by one of the stronger assumptions \emph{``has $\textrm{NTP}_2$''} or even \emph{``has NIP''}. Therefore these are arguably the very first general results about $\textrm{NTP}_2$ expansions of $(\bR,<,+)$.\\

\noindent Throughout definable will always mean definable with parameters. We will say that a structure \textbf{defines $\cB$} if it defines an isomorphic copy of $\cB$.  The main technical result of this paper is as follows.
\begin{thmA}
Let $\mathcal R = (\mathbb{R},<, D, \prec)$ where $D \subseteq \mathbb{R}$ is dense in some open interval and $\prec$ is an order on $D$ with order type $\omega$.
Then $\mathcal{R}$ defines $\mathcal{B}$.
\end{thmA}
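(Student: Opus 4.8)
The plan is to produce an explicit interpretation of $\mathcal{B}$ in $\mathcal{R}$. Work inside an open interval $I$ in which $D$ is dense (passing to $D\cap I$, which is definable, one may assume $D\subseteq I$). The number sort is immediate: the relation $\operatorname{succ}(d)=e$ given by $d\prec e\wedge\neg\exists f\,(d\prec f\prec e)$ is $\emptyset$-definable, and since $\prec$ has order type $\omega$ it makes $(D,\operatorname{succ})$ an isomorphic copy of $(\bN,+1)$ (and $\prec$ itself is in the language). So it suffices to find a definable set $Y$ and a definable $E\subseteq Y\times D$ with $\{E_y:y\in Y\}=\mathcal{P}(D)$, the \emph{full} power set of $D$; then $\mathcal{B}$ is the interpreted two-sorted structure on $Y/{\sim}$ and $D$, where $y\sim y'$ iff $E_y=E_{y'}$, with membership $E$ and successor $\operatorname{succ}$, and it is isomorphic to the standard model precisely because $\{E_y\}$ is all of $\mathcal{P}(D)$.

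To build such a family I would use the density of $D$ to put a tree coordinate system on $I$. The pair $(<,\prec)$ induces a binary search tree on $D$: inserting the elements of $D$ in $\prec$-order, a node $d$ acquires a left child --- the $\prec$-least element of $D$ in the largest interval $(\ell,d)$ containing no $e\prec d$ --- and dually a right child. Density of $D$ in $I$ makes each such interval nonempty, so every node has both children, the tree is the full infinite binary tree, and its child maps, ancestor relation, and left--right order are $\emptyset$-definable. Each $r\in I\setminus D$ descends along a unique branch, $\mathrm{Anc}(r)=\{d\in D: d\neq r,\ d\prec e\text{ for all }e\in D\text{ strictly between }r\text{ and }d\}$, which is definable from $r$; using density together with the finiteness of every initial segment $\{e:e\prec d\}$ one sees that $\mathrm{Anc}(r)$ is infinite, that it determines $r$, and that every branch arises. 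Hence $r\mapsto(\mathrm{Anc}(r),\text{the left/right decisions taken along it})$ is a definable bijection from $I\setminus D$ onto the branch space $2^{\bN}$.

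The real work, and where I expect the main difficulty to be, is to convert this ``definable copy of $2^{\bN}$'' into a definable family of subsets of $D$ equal to $\mathcal{P}(D)$. The naive attempt --- declare $d\in E_r$ iff the branch of $r$ turns right at its ancestor lying on the same level as $d$ --- does not work, because it needs a definable way to match up the levels of one branch with those of another (equivalently, an ``equal level'' predicate on the tree), and there is no evident way to get this from $<$ and $\prec$: equal level is already undefinable in the monadic theory of the infinite binary tree. So the coding must instead be arranged so that ``$d\in E_r$'' is read off directly from $r$ and $d$ through the two orders, with no disguised counting, while still realizing every subset of $D$; extracting enough from the density of $D$ to do this is the crux of the argument.

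Granting a suitable $E$, the rest is routine: $(D,\operatorname{succ})\cong(\bN,+1)$, $\sim$ is a definable equivalence relation whose classes biject canonically with $\mathcal{P}(D)$ precisely because $\{E_y:y\in Y\}=\mathcal{P}(D)$, and the interpreted structure is therefore isomorphic to $\mathcal{B}=(\mathcal{P}(\bN),\bN,\in,+1)$.
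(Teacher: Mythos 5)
You set up the interpretation framework sensibly --- use $(D,\prec)$ with its definable successor as the number sort, and seek a definable family $\{E_y\}$ equal to all of $\mathcal P(D)$ --- and you make an accurate diagnosis of the obstruction: the naive coding, ``$d\in E_r$ iff the branch of $r$ turns right at the ancestor on the same level as $d$,'' requires a synchronized level function that neither $<$ nor $\prec$ supplies. Having identified this, however, you stop; so what you have written is a correct account of where a proof must work, not a proof, and by your own admission the central step is missing.

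The paper's resolution is worth stating because it is a genuine idea rather than a technical fix. Instead of coding subsets of all of $D$, one first extracts a \emph{definable} infinite set $E\subseteq D$ with a splitting property: for any $d_1\prec d_2$ in $E$, between any two points of $D_{\preceq d_1}$ there is a point of $D$ of $\prec$-rank strictly between $d_1$ and $d_2$. The number sort of $\mathcal B$ is taken to be (a tail of) $E$, not $D$; since $(E,\prec)$ still has order type $\omega$ and is definable, its successor and predecessor $s,p$ are definable. For $c\in[0,1]$, let $l(c,e)$ and $r(c,e)$ be the best left and right approximations to $c$ among $D_{\preceq e}$, and set $F_c=\{e\in E: l(c,p(e))\neq l(c,e)\}$. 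This assigns a subset of $E$ to each $c$ definably, with no hidden counting: the ``previous level'' of $e$ is just $p(e)$, definable because $(E,\prec)$ has order type $\omega$, and the bit at $e$ is whether the left approximation moved between levels $p(e)$ and $e$. The splitting property then guarantees, by a nested-interval argument, that every infinite--coinfinite $X\subseteq E$ arises as $F_c$ for a suitable $c$; finite and cofinite $X$ are handled separately. Replacing ``level within $D$'' by ``$\prec$-rank within the definable splitting set $E$'' is exactly the move your proposal is missing.
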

\noindent \emph{Heuristically, an expansion of $(\bR,<)$ can only satisfy any kind of Shelah-style tameness if there is no dense set that can be definably ordered with order type $\omega$.} We envision Theorem A to be a starting point for a forthcoming study of the connections between the two notions of tameness described above. We will now state the main consequences of Theorem A established in this paper.


\subsection*{Cantor sets} We first consider expansions of $(\bR,<,+)$ that define a Cantor set. A \textbf{Cantor set} is a subset of $\bR$ that is nonempty, compact, and has neither isolated nor interior points. Such expansions were first studied by Friedman, Miller, Kurdyka and Speisseger \cite{FKMS}.

\begin{thmB} An expansion of $(\bR,<,+)$ that defines a Cantor set, defines $\cB$.
\end{thmB}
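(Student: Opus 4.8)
The plan is to reduce Theorem~B to Theorem~A. Let $\mathcal{R}$ be an expansion of $(\bR,<,+)$ that defines a Cantor set $K$; it suffices to define in $\mathcal{R}$ a set $D$ dense in some open interval together with an order $\prec$ on $D$ of order type $\omega$, for then $(\bR,<,D,\prec)$ is a reduct of $\mathcal{R}$ and Theorem~A applied to it shows that $\mathcal{R}$ defines $\mathcal{B}$.

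First I would analyze the gap structure of $K$. Put $I:=[\inf K,\sup K]$, a definable nondegenerate closed interval. Since $K$ is closed with empty interior, $U:=I\setminus K$ is a nonempty bounded open set whose connected components are bounded open intervals — the \emph{gaps} of $K$ — each with both endpoints in $K$. Let $L$ be the set of left endpoints of gaps, and for $a\in L$ let $r(a)\in K$ be the right endpoint of the corresponding gap and $\lambda(a):=r(a)-a>0$ its length; then $L$, $r$ and $\lambda$ are definable in $\mathcal{R}$. Because $K$ has no isolated and no interior points, a short argument gives that $L$ is dense in $K$. As the gaps are pairwise disjoint subsets of the bounded interval $I$, we have $\sum_{a\in L}\lambda(a)\le|I|<\infty$; hence $L$ is countably infinite and, for every $\varepsilon>0$, only finitely many $a\in L$ satisfy $\lambda(a)\ge\varepsilon$. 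Now define $\prec$ on $L$ by declaring $a\prec a'$ if either $\lambda(a)>\lambda(a')$, or $\lambda(a)=\lambda(a')$ and $a<a'$: this is a definable linear order in which, by the finiteness observation, each element has only finitely many predecessors, so $(L,\prec)$ has order type $\omega$. Enumerate the gaps accordingly as $g_0,g_1,\dots$, so $g_0$ is a longest gap and $g_n$ is ``the $n$-th most significant'' gap.

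The set $L$ now carries a definable $\omega$-ordering but is nowhere dense, so Theorem~A does not yet apply; the remaining task — the crux — is to push $K$ forward along a definable Cantor-function-type map. For $x\in I$ and $a\in L$, let $\varepsilon_a(x)=1$ if $x$ lies to the right of the gap $g_a$ and $0$ otherwise; since $g_a$ is a gap, $x\mapsto\varepsilon_a(x)$ is locally constant on $K$, and $x\mapsto(\varepsilon_a(x))_{a\in L}$ is injective on $K$ because any two points of $K$ are separated by a gap. What one wants is a definable, non-decreasing, continuous $f\colon I\to\bR$ that is \emph{constant on each gap} but not locally constant on $K$ — for instance the distribution function of the ``coin-flipping'' probability measure on $K$ obtained by repeatedly splitting the remaining mass in half at the successive most-significant gaps; such an $f$ maps $I$, and hence $K$, onto a closed interval $J$. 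Then $D:=f(L)$ is definable and, since $L$ is dense in $K$, dense in $J$; transporting $\prec$ through $f$ (which is finite-to-one on $L$, with any remaining ambiguity broken by $<$ on $\bR$) yields a definable order of type $\omega$ on $D$. Applying Theorem~A to the reduct $(\bR,<,D,\prec)$ of $\mathcal{R}$ then finishes the proof.

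The main obstacle is producing this $f$. If $K$ has positive Lebesgue measure one can simply take $f(x)=|K\cap(\inf K,x)|$, which is definable from $K$ and does the job; but a Cantor set can have measure zero, and from such a $K$ no positive-measure Cantor set need be definable, so in general one is forced to use the purely order-theoretic coin-flipping measure, whose distribution function is built by a recursion along $(L,\prec)$. I expect that verifying that a suitable such $f$ is genuinely first-order definable in $\mathcal{R}$, and that its image has nonempty interior — equivalently, that the enumeration $(g_n)$ can be chosen so that the dyadic subdivision of the target interval is actually filled out — is where the real work lies, and is the step I would expect to occupy most of the argument.
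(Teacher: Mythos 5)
There is a genuine gap, and you have already put your finger on it: your argument hinges on a definable, continuous, non-decreasing $f\colon I\to\bR$ that is constant on every gap, not locally constant on $K$, and whose image has nonempty interior, and you explicitly leave the construction and definability of such an $f$ unresolved. In fact this is exactly where the plan breaks down: when $K$ has Lebesgue measure zero the candidate $f(x)=|K\cap(\inf K,x)|$ degenerates, and the ``coin-flipping'' distribution function you propose is defined by an $\omega$-length recursion along the gap enumeration $(g_n)$; such a recursion is not a first-order definition over $(\bR,<,+,K)$, and it is not at all clear that any such $f$ is definable without already having something like a copy of $\cB$ in hand. So as written, the crucial step is missing, not merely postponed.

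The paper sidesteps the push-forward entirely. It keeps the same $\prec$ that you define (order the gap endpoints by gap length, breaking ties by $<$; this has type $\omega$ since for each $\varepsilon>0$ there are only finitely many gaps of length $\geq\varepsilon$), but instead of trying to make that set dense \emph{in an interval of $\bR$}, it changes the ambient order. Let $C$ be the right endpoints of bounded complementary intervals and let $L$ be the points of $K$ lying in the closure of no complementary interval; both are definable. Then $C$ is dense in $C\cup L$, and $(C\cup L,<)$ is a dense linear order without endpoints with the least-upper-bound property and countable dense subset $C$, hence is order-isomorphic to $(\bR,<)$. Consequently the definable structure $(C\cup L,\,<,\,C,\,\prec)$ is isomorphic to some $(\bR,<,D,\prec')$ with $D$ dense, and Theorem~A applies directly to that isomorphic copy. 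No Cantor-function is needed, and no measure-theoretic case distinction arises. So the fix for your proposal is: rather than transporting $C$ forward to make it dense in an interval, restrict the ambient linear order to $C\cup L$, which is already complete and has $C$ dense, and then invoke Theorem~A up to isomorphism.
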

\noindent \emph{Heuristically, an expansion of $(\bR,<,+)$ can only satisfy any kind of Shelah-style tameness if it does not define a Cantor set.} We do not know whether the converse implication of Theorem B is true as well. However, by Boigelot, Rassart and Wolper ~\cite[Theorem 5]{BRW}, $\cB$ defines an isomorphic copy of $(\bR,<,+,C)$ where $C$ is the usual Cantor set constructed by repeatedly deleting middle-thirds of a line segment. Therefore, if an expansion of $(\bR,<,+)$ defines $\cB$, it also defines an isomorphic copy of an expansion of $(\bR,<,+)$ that defines a Cantor set. Also note that Theorem B fails for expansions of $(\bR,<)$ by Dolich, Miller and Steinhorn \cite[7.1]{dms}.\\

\noindent This result is also significant as a complement to ~\cite{Cantor}. There a Cantor set $K$ is constructed such that every $(\bR,<,+,\cdot,K)$-definable set is Borel. The key idea is to find a Cantor set $K$ such that $(\bR,<,+,\cdot,K)$ defines a model of $\cB$ and essentially nothing else, and to use known results on $\cB$ to prove a quantifier elimination result for $(\bR,<,+,\cdot,K)$. Theorem B arguably shows that one must use results about $\cB$ to prove model theoretic tameness results for expansions which define Cantor sets.


\subsection*{Collapsing discrete sets} By \cite[Theorem A]{discrete2} an expansion of $(\bR,<,+,\cdot)$ that defines a discrete set $D \subseteq \bR$ and a function $f: D^k \to \bR$ whose image is somewhere dense, also defines $\mathbb{Z}$. The theory of such an expansion is clearly undecidable, and moreover defines every real projective set in the sense of descriptive set theory (see e.g. Kechris \cite[37.6]{kechris}). Therefore such structures are maximally wild from the point of view of first order model theory. In contrast, there are expansions of $(\bR,<,+)$ that define a discrete set $D \subseteq \bR$ and a function $f: D^k \to \bR$ whose image is somewhere dense, but whose theory is decidable. For example, by \cite{H-Twosubgroups} $(\bR,<,+,\mathbb Z,\sqrt{2}\mathbb Z)$ is such a structure. All known examples of such structures define $\cB$. We show that this is necessairly the case.

\begin{thmC}
Let $\mathfrak{R}$ be an expansion of $(\bR,<,+)$ that defines a discrete set $D \subseteq \bR^k$ and a map $f: D \to \bR$ whose image is somewhere dense.
Then $\mathfrak{R}$ defines $\cB$.
\end{thmC}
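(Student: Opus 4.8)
The plan is to reduce to Theorem A: it is enough to produce, definably in $\mathfrak R$, a set $E \subseteq \mathbb R$ that is dense in some open interval together with an order $\prec$ on $E$ of order type $\omega$, for then $\mathfrak R$ defines the structure $(\mathbb R, <, E, \prec)$ and we are done by Theorem A. Since $f(D)$ is somewhere dense, its closure contains an open interval $I$, and $E := f(D) \cap I$ is $\mathfrak R$-definable and dense in $I$. To order it, the idea is to transport a well-ordering of $D$ along $f$: if $\prec_D$ is a definable well-ordering of $D$ of order type $\omega$, set $\phi(y) := \min_{\prec_D}\{x \in D : f(x) = y\}$ for $y \in E$ and declare $y \prec y'$ iff $\phi(y) \prec_D \phi(y')$. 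Then $\phi$ is a definable injection of $E$ into $(D,\prec_D)$, so $(E,\prec)$ order-embeds into $\omega$; as $E$ is infinite it has order type exactly $\omega$, and $\prec$ is definable. Thus the whole problem reduces to: definably well-order the discrete set $D$ in order type $\omega$.

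When $D$ is closed (and discrete) this is immediate. Order $D$ by the $\ell^\infty$-norm $\|x\|_\infty = \max_i |x_i|$, which is definable from $+$ and $<$, breaking ties lexicographically. For each $r > 0$ the set $D \cap [-r,r]^k$ is closed and bounded, hence compact, hence finite since $D$ is discrete; therefore every element has only finitely many predecessors, so the order is a well-order, and it is infinite (otherwise $f(D)$ would be finite, not dense), hence of order type $\omega$. Transporting along $f$ as above and applying Theorem A disposes of this case, and in particular proves Theorem C whenever $D$ happens to be closed.

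The obstacle is that $D$ need not be closed. Its set of accumulation points $A := \closure{D}\setminus D$ is $\mathfrak R$-definable and closed, is disjoint from $D$, and in fact coincides with the set of non-isolated points of $\closure{D}$ — so $D$ is exactly the set of isolated points of the definable closed set $\closure{D}$, and the iterated derived sets $\closure{D} \supseteq \closure{D}' \supseteq \closure{D}'' \supseteq \cdots$ (equivalently $\closure{D} \supseteq A \supseteq A' \supseteq \cdots$) are, level by level, all $\mathfrak R$-definable. The plan is to exploit this definable stratification: if $\closure{D}$ is uncountable, extract from it a definable Cantor set and conclude via Theorem B; otherwise $\closure{D}$ is scattered, and one uses the strata together with the distance functions $x \mapsto \operatorname{dist}(x,A)$, $x \mapsto \operatorname{dist}(x,A')$, $\dots$ to build the $\omega$-ordering of $D$ one "cluster'' at a time and then interleave the clusters. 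I expect the real difficulty to lie precisely in making this interleaving definable — one needs, in effect, a definable substitute for counting on $D$ — together with the step of producing a genuinely definable Cantor set in the uncountable case; this is where I expect the technical heart of the argument to be.
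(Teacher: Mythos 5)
Your proof of the case where $D$ is closed is fine: for each $r>0$ the set $D\cap[-r,r]^k$ is compact and discrete and hence finite, so ordering $D$ by $\ell^\infty$-norm with lexicographic tie-breaking gives a definable order of type $\omega$, and transporting along $f$ to the dense set $f(D)$ and applying Theorem A finishes the job. But as you yourself flag, the argument is incomplete for non-closed $D$, and that is the actual content of the theorem. The route you sketch --- iterated Cantor--Bendixson derivatives of $\closure{D}$, extracting a Cantor set when $\closure{D}$ is uncountable, definably interleaving clusters when it is scattered --- has several problems that I don't think you can patch this way. First, $\closure{D}$ and its derived sets live in $\bR^k$, while Theorem B requires a Cantor subset of $\bR$, so you would additionally need a projection or dimension-reduction argument in an expansion of $(\bR,<,+)$. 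Second, the Cantor--Bendixson process on a closed subset of $\bR^k$ need not stabilize after finitely many steps; a transfinite iteration is not something you can carry out by a first-order formula. Third, even in the finite-rank scattered case, the ``interleaving of clusters'' you need is precisely a definable enumeration of a countable set by $\omega$, which is the very thing you are trying to build; nothing in your sketch produces it.

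The paper sidesteps $\closure{D}$ entirely with a different stratification. Say $d\in D$ is $s$-isolated if $\|d-d'\|_1>s$ for every $d'\in D\setminus\{d\}$, and for $s,t>0$ set $D_{s,t}=\{d\in D: \|d\|_1\le t \text{ and } d \text{ is } s\text{-isolated}\}$. Because every point of the discrete set $D$ is $s$-isolated for some $s>0$, these sets exhaust $D$; each $D_{s,t}$ is definable, \emph{closed} (an $s$-isolated point stays isolated), discrete and bounded, hence finite; and the family is decreasing in $s$, increasing in $t$. Thus $f(D)$ is a union $\bigcup_{s,t}f(D_{s,t})$ of a doubly-monotone definable family of finite subsets of $\bR$. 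The paper then proves (Lemmas 3.1--3.4) that the union of any increasing or decreasing definable family of finite --- and then of nowhere dense --- subsets of $\bR$ is nowhere dense, by putting a definable $\omega$-order on the union (using $t\mapsto\sup\{t: a\in A_t\}$ as the ranking function) and invoking Theorem A; the two-parameter Lemma 3.4 then gives that $f(D)$ is nowhere dense, a contradiction. This is similar in spirit to your closed-case argument, but the decisive move is stratifying $D$ itself by the parametrized isolation radius rather than analyzing $\closure{D}$, which is what makes the non-closed case tractable. You identified the right difficulty but missed that move.
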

\noindent  \emph{Heuristically, an expansion of $(\bR,<,+)$ that definably collapses a discrete set, can not satisfy any kind of Shelah-style tameness.} The converse implication of Theorem C is not true. The expansion of the real field by the Cantor set $K$ constructed in ~\cite{Cantor} defines $\cB$ and does not satisfy the assumptions of Theorem C. However, $\cB$ defines an isomorphic copy of $(\bR,<,+,\mathbb Z,\sqrt{2}\mathbb Z)$ by ~\cite[Theorem C]{H-Twosubgroups}. For that reason, an expansion of $(\bR,<,+)$ that defines $\cB$, also defines an isomorphic copy an expansion of $(\bR,<,+)$ that satisfies the assumptions of Theorem C.\\

\noindent Dolich and Goodrick recently established Theorem C when \emph{``defines $\cB$''} is replaced by \emph{``is not strong''} (see ~\cite[2.17]{DG}). We refer the reader to their paper for a definition of strongness, but note that strong structures form a subclass of all $\textrm{NTP}_2$ structures. Theorem C can be regarded as a significant generalization of their result for expansions of $(\bR,<,+)$.

\subsection*{Dimension equality}  In \cite{HM} Hieronymi and Miller showed that various notions of metric and topological dimensions coincide on closed definable sets in expansions of the real field that do not define $\mathbb Z$. In particular, topological dimension and Assouad dimension are equal on such sets. We refer the reader to \cite{HM} for a precise statements and definitions of the dimensions involved. An expansion of the real field that does not define $\cB$ cannot define $\mathbb Z$. It is therefore natural to ask if non-definability of $\cB$ has stronger consequences on the dimensions of definable sets.

\begin{thmD}
Let $\mathcal{R}$ be an expansion of $(\mathbb{R},<,+,\cdot)$ that does not define $\mathcal B$ and does not define a dense and co-dense subset of $\mathbb{R}$.
If $A \subseteq \mathbb{R}^k$ is definable in $\mathcal R$, then the topological dimension and the Assouad dimension of $A$ agree.
\end{thmD}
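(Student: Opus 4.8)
The plan is to combine the Hieronymi--Miller theorem on closed definable sets with the hypothesis that $\mathcal R$ defines no dense, co-dense subset of $\bR$, reducing Theorem D to the equality $\dim_{\mathrm{top}}A=\dim_{\mathrm{top}}\cl(A)$ for every definable $A\subseteq\bR^k$, and then to prove that equality using the structure theory of closed definable sets that Theorems B and C make available.

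\emph{Reductions.} Since $\mathcal R$ expands the real field and does not define the dense, co-dense set $\mathbb{Q}$, it does not define $\bZ$, so the Hieronymi--Miller theorem applies to closed definable sets in $\mathcal R$: topological dimension and Assouad dimension agree on them. Using the elementary inequalities $\dim_{\mathrm{top}}\le\dim_{\mathrm H}\le\overline{\dim}_{\mathrm B}\le\dim_{\mathrm A}$, valid for all subsets of $\bR^k$, together with the standard identity $\dim_{\mathrm A}A=\dim_{\mathrm A}\cl(A)$, I get $\dim_{\mathrm{top}}A\le\dim_{\mathrm A}A=\dim_{\mathrm A}\cl(A)=\dim_{\mathrm{top}}\cl(A)$, so it suffices to prove $\dim_{\mathrm{top}}A\ge\dim_{\mathrm{top}}\cl(A)=:d$.

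\emph{Main argument.} Set $A^\circ:=\Int_{\cl(A)}(A)$, the interior of $A$ relative to its closure --- a definable, locally closed subset of $A$. First I would show $\dim_{\mathrm{top}}\bigl(\cl(A)\setminus\cl(A^\circ)\bigr)<d$. Indeed, $\cl(A)\setminus\cl(A^\circ)$ is the intersection of $\cl(A)$ with the open set $\bR^k\setminus\cl(A^\circ)$; if it had dimension $d$ then, applying the structure theory of closed definable sets locally (each such set should contain a relatively open subset that is a definable graph of a continuous map over an open box in some coordinate $\bR^d$), there would be an open box $B$ with $B\cap\cl(A^\circ)=\emptyset$ and $\cl(A)\cap B$ equal to such a $d$-cell, carried by a coordinate projection $\pi$ definably homeomorphically onto an open box $V\subseteq\bR^d$. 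Then $A\cap B$ is dense in $\cl(A)\cap B$ but, since $A^\circ\cap B=\emptyset$, has empty interior relative to it; hence $\pi(A\cap B)$ is a definable dense and co-dense subset of $V$. As $\mathcal R$ defines no dense, co-dense subset of $\bR^d$ --- a lemma I would establish first, deducing it from the case $d=1$ by a Baire-category fibering argument --- this is a contradiction. Since $\cl(A)\setminus\cl(A^\circ)$ is open, hence $F_\sigma$, in $\cl(A)$, the countable closed sum theorem now gives $\dim_{\mathrm{top}}\cl(A^\circ)=d$. Next I would show $\dim_{\mathrm{top}}\bigl(\cl(A^\circ)\setminus A^\circ\bigr)<d$: the set $N:=\cl(A^\circ)\setminus A^\circ$ is closed and definable and nowhere dense in $\cl(A^\circ)$, so if $\dim_{\mathrm{top}}N$ were $d$ then $N$ would contain a $d$-cell, which as a $d$-dimensional definable submanifold sitting inside the $d$-dimensional closed definable set $\cl(A^\circ)$ would be relatively open in it, contradicting nowhere density. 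Now $\cl(A^\circ)=A^\circ\cup N$ with $A^\circ$ open (hence $F_\sigma$) in $\cl(A^\circ)$ and $\dim_{\mathrm{top}}N<d$, so the countable closed sum theorem yields $\dim_{\mathrm{top}}A^\circ=d$; as $A^\circ\subseteq A$, this is what was needed.

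\emph{The obstacle.} Two inputs carry the argument. The first --- that $\mathcal R$ defines no dense, co-dense subset of $\bR^d$ --- I expect to be a routine preliminary: it should follow from the hypothesis for $\bR$ by a Kuratowski--Ulam / Baire-category argument, passing to a well-chosen definable affine slice or iterating a projection and peeling off a defect function. The second input is the real difficulty: the structure theory of closed definable sets invoked in both steps --- that they decompose into definable cells, possess regular points, and that a top-dimensional cell inside a closed definable set of the same dimension is, up to a lower-dimensional set, relatively open in it. Under the hypotheses of Theorem D closed definable sets ought to behave as in a $d$-minimal expansion of the real field, and the role of Theorems B and C is exactly to rule out the definable Cantor sets and definable collapses of discrete sets that would otherwise obstruct this; carrying this out --- plausibly through a preliminary analysis of closed definable subsets of $\bR^k$ in $\mathcal R$ --- is where the genuine work lies.
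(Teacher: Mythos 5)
Your proposal misses the point of the paper's proof, which is a two-line citation argument, and in its place proposes a long reduction that rests on machinery the paper never develops.

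The paper's proof: cite \cite[1.6]{HM}, which states directly that topological dimension and Assouad dimension agree on \emph{all} definable sets (not just closed ones) in any expansion of $(\bR,<,+,\cdot)$ that defines neither a Cantor subset of $\bR$ nor a dense and co-dense subset of $\bR$; then apply Theorem B, whose contrapositive says that not defining $\cB$ forbids defining a Cantor set. That is the whole argument. You read the informal summary of \cite{HM} two sentences earlier (``Hieronymi and Miller showed that various notions of metric and topological dimensions coincide on \emph{closed} definable sets\dots that do not define $\bZ$'') and concluded that the available input only covers closed sets under a $\bZ$-hypothesis, then set out to bridge the remaining distance yourself. But \cite{HM} also contains the stronger Theorem~1.6, which already handles arbitrary definable sets under exactly the Cantor-set and dense-co-dense hypotheses that Theorem~D supplies (the first via Theorem~B, the second directly).

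The gap in your proposal is concrete: the ``main argument'' invokes a structure theory for closed definable sets in $\mathcal R$ --- that a closed definable set of topological dimension $d$ locally contains a definable $d$-cell (the graph of a continuous map over an open box in a coordinate $\bR^d$), and that such a cell is relatively open in the ambient set up to a lower-dimensional piece --- and also a lemma that $\mathcal R$ defines no dense and co-dense subset of $\bR^d$ for $d>1$. None of this is established in the present paper, and neither Theorem~B nor Theorem~C yields it directly; you acknowledge that ``carrying this out\dots is where the genuine work lies,'' but that is precisely the work that would have to be done, and it is a substantial portion of \cite{HM} itself. As written, your reduction from $A$ to $\cl(A)$ is not a proof but an outline whose hardest steps are unargued. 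Separately, note that your argument, if it worked, would never use the ``does not define $\cB$'' hypothesis except through ``does not define $\bZ$'' (itself derived from the dense-co-dense hypothesis); this should have signalled that something was off, since the paper clearly needs the $\cB$-hypothesis to invoke Theorem~B and rule out Cantor sets, and the absence of Cantor sets is essential to \cite[1.6]{HM}.
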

\begin{proof}
It follows directly from \cite[1.6]{HM} that topological dimension and Assouad dimension agree on definable sets in an expansion of $(\mathbb{R},<,+,\cdot)$ that does not define a Cantor subset of $\mathbb{R}$ or a dense and co-dense subset of $\mathbb{R}$. Now apply Theorem B.
\end{proof}

\subsection*{Tame topology} The results in this paper can be used to develop tame topology\footnote{Tame topology in sense of van den Dries \cite{tametop}.} for expansions of $(\bR,<,+)$ that do not define $\cB$. To show the viability of such a project, we give a weak generalization of the monotonicity theorem for o-minimal expansions of $(\bR,<,+)$. A function $f : \bR\to \bR$ is \textbf{Lipschitz} on $X\subseteq \bR$ if there is a $\lambda \in \bR_{>1}$ such that $| f(x) - f(y) | \leq \lambda | x - y |$ for all $x,y \in X$, $f$ is \textbf{bi-Lipschitz} on $X$ if there is a $\lambda \in \bR_{>}$ such that
\[
\frac{1}{\lambda}|x-y| \leq |f(x)-f(y)| \leq \lambda |x-y| \quad \text{for all } x,y \in X.
\]
\begin{thmE}
Let $\mathcal R$ be an expansion of $(\bR,<,+)$ that does not define $\cB$. Let $f: \bR \to \bR$ be a continuous $\mathcal R$-definable function. Then there is a definable open dense $U \subseteq \bR$ such that $f$ is strictly increasing, strictly decreasing, or constant on every connected component of $U$. Moreover, there is an open dense subset $V$ of $\bR$ such that $f$ is either constant or locally bi-Lipschitz on each connected component of $V$.
\end{thmE}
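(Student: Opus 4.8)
The plan is to prove both assertions by the same method: isolate an ``obviously good'' open set --- definable in the monotonicity case --- and prove it is dense by contradiction, using $f$ to define, on some open interval, a set $D$ that is dense there together with a definable order on $D$ of type $\omega$; Theorems A and B then force $\mathcal R$ to define $\cB$. \emph{For the monotonicity statement}, let $U^{+}$, $U^{-}$, $U^{0}$ be the definable, open, pairwise disjoint sets of points having a neighbourhood on which $f$ is, respectively, strictly increasing, strictly decreasing, constant, and put $U:=U^{+}\cup U^{-}\cup U^{0}$. On a connected component $C$ of $U$ the three relatively open sets $C\cap U^{+}$, $C\cap U^{-}$, $C\cap U^{0}$ partition $C$, so one of them is all of $C$, and a locally strictly monotone (resp.\ locally constant) function on an interval is strictly monotone (resp.\ constant) there by a chaining argument over a finite subcover; so everything reduces to density of $U$. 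Suppose $U$ is not dense. Then there is a bounded open interval $I$ on which $f$ is nowhere locally monotone and nowhere locally constant; in particular $f$ has no nondegenerate plateau in $I$ and is non-injective on every subinterval, which forces each subinterval to contain an ``arch'' $[a,b]$, i.e.\ a nondegenerate interval with $f(a)=f(b)$ and, after passing to the symmetric ``lower arch'' case if necessary, $\max_{[a,b]}f>f(a)$. The extremal fibre $A=\{x\in[a,b]:f(x)=\max_{[a,b]}f\}$ is a definable compact subset of $(a,b)$ with empty interior. If in some subinterval $A$ has no isolated point, then $A$ is a Cantor set, so $\mathcal R$ defines $\cB$ by Theorem B; otherwise every such $A$ has an isolated point, which is a strict local extremum of $f$, and I may assume the strict local extrema of $f$ are dense in $I$.

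Now let $D$ be the definable set of strict local extrema of $f$ in $I$, and for $x\in D$ let $\rho(x)>0$ be the largest $\delta$ with $f$ attaining a strict extremum at $x$ on $(x-\delta,x+\delta)$; $\rho$ is definable. The key point is that $\{x\in D:\rho(x)>r\}$ is finite for each $r>0$: two strict maxima (or two strict minima) with $\rho>r$ within distance $r$ of each other violate each other's strictness, and an elementary case analysis shows that a maximum--minimum pair cannot be the accumulation seed of points with $\rho>r$; boundedness of $I$ then gives finiteness. Hence $\operatorname{im}(\rho)$ meets every $(r,\infty)$ in a finite set, so it has order type $\omega^{*}$ and $\rho$ is finite-to-one; defining $x\prec y$ iff $\rho(x)>\rho(y)$, or $\rho(x)=\rho(y)$ and $x<y$, yields a definable order on $D$ of type $\omega$. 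Since $D$ is dense in $I$, Theorem A gives the contradiction, and the monotonicity statement follows.

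\emph{For the bi-Lipschitz statement} ($V$ is not required to be definable), by the monotonicity statement it suffices to treat a component $C$ of $U^{+}$ --- the $U^{-}$ case is symmetric, and on $U^{0}$ the function $f$ is constant. With $g:=f|_{C}$, both $g$ and $g^{-1}$ are continuous, strictly increasing and $\mathcal R$-definable, and $x$ is locally bi-Lipschitz for $f$ exactly when $g$ is locally Lipschitz at $x$ and $g^{-1}$ is locally Lipschitz at $g(x)$. The set of locally Lipschitz points of $g$ is open and definable, and the $g$-pullback of the analogous set for $g^{-1}$ is open; since $g\mapsto g^{-1}$ preserves all hypotheses, it is enough to show that the locally Lipschitz points of $g$ are dense in $C$, for then the intersection of those two dense open sets gives a dense open subset of $C$ on which $f$ is locally bi-Lipschitz, and these subsets across all components assemble (with $U^{0}$) into the required $V$. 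If the locally Lipschitz points of $g$ are not dense, there is a subinterval $J$ on which $g$ is nowhere locally Lipschitz, and I would run the analogue of the previous paragraph with the strictness radius replaced by a ``local steepness radius'': using that $g$ fails to be $n$-Lipschitz on every neighbourhood of every point of $J$, for every $n$, extract from the secant-slope function of $g$ a definable set dense in a subinterval of $J$ together with a definable finite-to-one invariant whose image has order type $\omega^{*}$, and apply Theorem A once more.

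I expect this last extraction --- producing the dense $\omega$-ordered set in the nowhere-locally-Lipschitz situation --- to be the main obstacle. In the monotonicity case the separation property of the strictness radius makes the level sets $\{\rho>r\}$ finite almost for free; in the Lipschitz case the failure of local Lipschitzness is scale-uniform, so a naive radius degenerates, and one must work with a subtler invariant built from locally steepest secant configurations of $g$, verifying both that its carrier is dense in a subinterval and that it is finite-to-one with order-type-$\omega^{*}$ image. Everything else is the bookkeeping described above.
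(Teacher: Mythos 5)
Your monotonicity argument is correct and takes a genuinely different route from the paper's. The paper proves its Proposition~4.2 (every open interval on which $f$ is nonconstant contains a subinterval of strict monotonicity) by passing to the one-sided inverse $g(p)=\min f^{-1}(p)$ and applying an oscillation dichotomy (the union of the oscillation level sets is nowhere dense, which rests, via the nowhere-dense-families lemmas of Section~3, on Theorem~A). You instead show density of $U^{+}\cup U^{-}\cup U^{0}$ directly by contradiction: on an interval avoiding $U$, in any subinterval either the extremal fiber of an arch is perfect, compact and with empty interior (a Cantor set; Theorem~B applies), or it has an isolated point, which is a strict local extremum; hence strict local extrema are dense there, and the strictness radius $\rho$ is a definable finite-to-one invariant with $\omega^{*}$-type image, giving a definable $\omega$-ordering on a dense set and invoking Theorem~A. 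This is sound, and both routes bottom out in Theorem~A. (A minor remark: the ``accumulation seed'' step in your finiteness claim is unnecessary --- pairwise $r$-separation among strict maxima with $\rho>r$, the same among strict minima, and boundedness of $I$ already give finiteness of $\{x\in D:\rho(x)>r\}$.)

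The bi-Lipschitz half, however, has a genuine gap, and you flag it yourself: you never produce a dense definable set with an $\omega$-type order in the ``nowhere locally Lipschitz'' scenario, and you correctly note that the strictness-radius idea degenerates there because Lipschitz failure is scale-uniform. The paper avoids this entirely and does not mimic the extremum argument. Its key tool is Lemma~4.6: a nowhere dense definable subset of $\bR$ is Lebesgue null (proved via Steinhaus's theorem together with Theorem~C). Given that, for a monotone continuous definable $f$ the Lebesgue differentiation theorem gives differentiability almost everywhere, so almost every point lies in some closed definable set $B_{\lambda,\delta}$ on which secant slopes through $x$ in a $\delta$-window are bounded by $\lambda$; Lemma~4.6 forces each $B_{\lambda,\delta}\setminus\interior{B_{\lambda,\delta}}$ to be null, hence $\bigcup_{\lambda,\delta\in\mathbb{Q}^{>}}\interior{B_{\lambda,\delta}}$ is dense, and $f$ is locally Lipschitz there. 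Your framing --- work on a component of $U^{+}$, pass to $g^{-1}$ and back, intersect two dense open sets to get $V$ --- is exactly the bookkeeping the paper also does, but without a substitute for the differentiation-plus-measure-zero step the density of locally Lipschitz points remains unproved.
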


\noindent Fornasiero \cite{Forna} proved Theorem E for expansions of $(\bR,<,+,\cdot)$ that do not define $\bZ$. Theorem E is new for NIP expansions of $(\bR,<,+)$. While proving Theorem E, we also show that
every subset of $\bR$ definable in an expansion of $(\bR,<,+)$ that does not define $\cB$, is either somewhere dense or has Lebesgue measure zero.

\subsection*{Acknowledgements} The authors would like to thank Chris Miller for helpful comments on an earlier version, William Balderrama for spotting several typos and the anonymous referee for carefully reading this paper.

\subsection*{Notations and Terminology}
Given a linear order $(D,\prec)$ and $d \in D$ we let $D_{\prec d} = \{e \in D : e \prec d\}$ and $D_{\preceq d} = \{e \in D : e \preceq d\}$.
We let $\bR^>$ be the set of positive reals and $\bR^\geq$ be the set of nonnegative reals.
Let $\mathcal{A} = \{A_t : t > 0\}$ be a family of sets.
We say that $\mathcal{A}$ is \textbf{increasing} if $A_t \subseteq A_{t'}$ whenever $t < t'$ and we say that $\mathcal{A}$ is \textbf{decreasing} if $A_{t} \subseteq A_{t'}$ when $t' < t$.
Let $K \subseteq \mathbb{R}$ be closed.
The connected components of $\mathbb{R} \setminus K$ are open intervals, we refer to them as the \textbf{complementary intervals} of $K$.
Given $A \subseteq B \subseteq \bR$ and $\delta > 0$ we say that $A$ is \textbf{$\delta$-dense} in $B$ if for every $b \in B$ there is an $a \in A$ such that $|a - b| < \delta$.
The interior of a set $A\subseteq \bR$ is denote by $\Int(A)$.

\section{Proof of Theorem A}
Let $\mathcal R = (\mathbb{R},<, D, \prec)$ where $D$ is a dense subset of an open interval of $\mathbb{R}$ and $\prec$ is an order on $D$ with order type $\omega$. In this section, we will show that $\mathcal{R}$ defines $\mathcal{B}$. The overall idea of this proof is based on earlier work of Hieronymi and Tychonievich~\cite{HT}. Recall that a partial order has order type $\omega$ if and only if it is linear and every initial segment is finite. We suppose without loss of generality that $D$ is dense in $[0,1]$, $0,1 \in D$, $1$ is the $\prec$-minimum of $D$ and $0$ is the $\prec$-successor of $1$.

\begin{Def} Let $d_1,d_2 \in D$ be such that $d_1 \prec d_2$.
We say that $D$ \textbf{splits} between $d_1$ and $d_2$ if for every $e_1,e_2 \in D$ such that $e_1 < e_2$ and $e_1,e_2 \preceq d_1$, there is an $e_3 \in D$ such that $d_1 \prec e_3 \prec d_2$ and $e_1 < e_3 < e_2$.
We say that $E \subseteq D$ is a \textbf{splitting set} if $E$ is infinite and $D$ splits between every $d_1,d_2 \in E$ with $d_1 \prec d_2$.
\end{Def}
\noindent Since $D$ is $\leq$-dense in $[0,1]$, we can find for every $d_1 \in D$ an element $d_2 \in D$ such that $D$ splits between $d_1$ and $d_2$.

\begin{Def} Let $c \in [0,1]$ and $d \in D$. We say that \textbf{the best approximation of $c$ from left before $d$}, denoted by $l(c,d)$, is the $<$-maximal element of $D_{\preceq d}$ in $[0,c]$. \textbf{The best approximation of $c$ from right before $d$}, denoted by $r(c,d)$, is the $<$-minimal element of $D_{\preceq d}$ in $[c,1]$. We denote the set of all best approximation of $c$ from the left by $L_c$.
\end{Def}

\noindent When $d \succeq 0$, then $l(c,d)$ and $r(c,d)$ exists for every $c\in [0,1]$. Note that for every $c\in [0,1]$ and every  $d,e \in D$ with $d\preceq e$, we have $l(d,e)=d$ and $l(l(c,e),d)=l(c,d)$.

\begin{Lem}
There is a definable splitting set.
\end{Lem}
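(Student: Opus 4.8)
The plan is to extract from the remark preceding the lemma a definable ``successor-style'' map on $D$ and then to show that one of its orbits is at once a splitting set and definable. For $d\in D$, let $g(d)$ be the $\prec$-least $d'\in D$ such that $D$ splits between $d$ and $d'$; the remark guarantees that such a $d'$ exists, and since ``$D$ splits between $x$ and $y$'' is expressed by an explicit first-order formula in $\{<,D,\prec\}$, the map $g\colon D\to D$ is $\mathcal R$-definable. Note that $d\prec g(d)$, so $g$ is strictly $\prec$-increasing.

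Next I would record the monotonicity of the splitting relation: if $D$ splits between $d_1$ and $d_2$, and $d_1'\preceq d_1$, $d_2\preceq d_2'$ in $D$, then $D$ splits between $d_1'$ and $d_2'$. Indeed, for a pair $e_1<e_2$ in $D$ with $e_1,e_2\preceq d_1'$ we also have $e_1,e_2\preceq d_1$, and any witness $e_3$ for this pair furnished by the $(d_1,d_2)$-splitting satisfies $d_1'\prec e_3\prec d_2'$ as well. The point is that for an infinite $E\subseteq D$ (which, being an infinite subset of a linear order of type $\omega$, again has type $\omega$) it suffices to verify the splitting condition on $\prec$-consecutive pairs of $E$: if $d_1\prec d_2$ lie in $E$ and $d_1^{+}$ is the immediate $\prec$-successor of $d_1$ in $E$, then $d_1\prec d_1^{+}\preceq d_2$, and splitting between $d_1$ and $d_1^{+}$ upgrades to splitting between $d_1$ and $d_2$. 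Hence the $g$-orbit
\[
E:=\{\,g^{n}(1):n\in\mathbb{N}\,\}
\]
is a splitting set: it is infinite because $g$ is strictly $\prec$-increasing, and $D$ splits between $d$ and $g(d)$ for every $d$ by the definition of $g$, hence between all $\prec$-consecutive — and therefore all — pairs of elements of $E$.

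The remaining point, which I expect to be the genuinely delicate one, is that $E$ is $\mathcal R$-definable. As described, $E$ is obtained by iterating $g$, i.e.\ by a transitive-closure condition, and such conditions are not prima facie first-order. My plan is to replace it by a direct first-order description and to verify that description by induction along $\prec$. The structural fact to exploit is that every $\prec$-initial segment of $D$ is finite, so that whether $d\in E$ should be determined by the finite configuration $\bigl(D_{\preceq d},<,\prec\bigr)$ together with the splitting relation on it; concretely one wants to show that $d\in E$ is equivalent to a suitable first-order assertion quantifying over $d'\in D_{\prec d}$ — involving whether $D$ splits between such $d'$ and $d$, or between $d'$ and the $\prec$-predecessor of $d$ — and then to prove by $\prec$-induction that this assertion carves out exactly $\{g^{n}(1):n\in\mathbb{N}\}$. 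Identifying the correct assertion and carrying out that induction is where I anticipate the real work lies; it is the step that must genuinely use the interaction between the dense order $<$ and the well-order $\prec$, and it is the natural candidate for the main obstacle.
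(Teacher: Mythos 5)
Your combinatorial outline — defining $g(d)$ as the $\prec$-least $d'$ with $D$ splitting between $d$ and $d'$, noting the monotonicity of the splitting relation, and observing that the orbit $E=\{g^n(1):n\in\mathbb N\}$ is therefore a splitting set — is correct as far as it goes, and the monotonicity remark is a genuinely useful observation. But you have explicitly left the definability of $E$ unproved, and you yourself call this ``the genuinely delicate'' step and ``the real work.'' That is exactly right: this is the entire content of the lemma, and it is where your proposal stops.

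Moreover, the route you sketch for closing the gap is unlikely to succeed. The orbit of a definable function under iteration is, in general, \emph{not} first-order definable, and there is no obvious local criterion ``quantifying over $d'\in D_{\prec d}$'' that decides membership in $E$: whether $d\in E$ depends on the existence of an entire finite $g$-chain from $1$ to $d$, each of whose intermediate points must itself lie in $E$, and first-order logic cannot express ``there is such a finite chain'' without some auxiliary device. The paper supplies exactly such a device, and it is of a different flavour from anything in your plan: its inductive construction forces the sequence $\{e_i\}$ to be not only $\prec$-increasing but also $\leq$-increasing and $\leq$-bounded, placing each $e_{n+1}$ in a shrinking real interval $(e_n,e_n+\epsilon_n)$. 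The sequence then converges to a single real $a$, and one recovers $E$ as $L_a$, the (definable) set of best left-approximations to the one parameter $a$. Your map $g$ does not control the position of $g(d)$ in the real order $\leq$ at all, so its orbit need not converge in any useful sense, and there is no single real parameter from which to recover it. To repair the proposal you would have to replace $g$ by a choice that mimics the paper's $\leq$-control and then invoke completeness of $\bR$ — at which point you have essentially reconstructed the paper's argument, not carried out the $\prec$-induction you propose.
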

\begin{proof}
We inductively construct a sequence $\{e_i\}_{i \in \bN}$ of elements of $D$ which is both strictly $\prec$-increasing and $\leq$-increasing such that for every $n$ there is an $\epsilon_n > 0$ such that for all $c \in (e_n,e_n + \epsilon_n)$:
\begin{enumerate}
\item $L_c \cap D_{\preceq e_n} = \{e_0,\ldots,e_n\}$.
\item $D$ splits between $e_i$ and $e_{i+1}$  for all $0 \leq i \leq n - 1$.
\end{enumerate}
For the base case we take $e_0$ to be $0$ and $\epsilon_0 = 1$.
We now treat the inductive case.
We assume that we have $e_0,\ldots,e_n$ and $\epsilon_n$ satisfying the conditions and construct $e_{n+1}$ and $\epsilon_{n+1}$.
Let $d \in D$ be such that $e_n \prec d$ and $D$ splits between $e_n$ and $d$.
Let $0 < \delta < \epsilon_n$ be such that $(e_n,e_n+\delta) \cap D_{\preceq d} = \emptyset$.
This is possible as $D_{\preceq d}$ is finite.
We let $e_{n+1}$ be the $\preceq$-least element of $D \cap (e_n, e_n + \delta)$.
This implies that $d \prec e_{n+1}$, it follows that $D$ splits between $e_n$ and $e_{n+1}$.
We now let $\epsilon_{n+1} > 0$ be such that $e_{n+1} + \epsilon_{n+1} = e_n + \delta$.
If $c \in (e_{n+1}, e_{n+1} + \epsilon_{n+1})$, then $e_{n+1} \in L_c$. If $e_n \prec e \prec e_{n+1}$, then $e > c$ so $e \notin L_c$.
We have
\[
(e_{n+1}, e_{n+1} + \epsilon_{n+1}) \subseteq (e_n, e_n + \delta) \subseteq (e_n, e_n + \epsilon_n).
\]
For all $c \in (e_{n+1}, e_{n+1} + \epsilon_{n+1})$ we have:
\begin{enumerate}
\item $L_c \cap D_{\preceq e_{n+1}} = \{e_0, \ldots, e_{n+1}\}$,
\item $D$ splits between $e_i$ and $e_{i+1}$ for all $0 \leq i \leq n + 1$.
\end{enumerate}
This completes the inductive construction.
Note that $\{e_i\}_{i \in \bN}$ is a splitting set.
We show that $\{e_i\}_{i \in \bN}$ is definable.
As $\{e_i\}_{i \in \bN}$ is $\leq$-bounded and $\leq$-increasing, there is an $a \in [0,1]$ such that $e_i \to a$ as $i \to \infty$.
Then $a \in (e_n, e_n + \epsilon_n)$ for all $n$, so $L_a \cap D_{\preceq e_n} = \{e_0,\ldots,e_n\}$ for all $n$ and thus $L_a = \{e_i\}_{i \in \bN}$.
\end{proof}

\noindent For the following, fix a definable splitting set $E = \{e_i\}_{i \in \bN}$ with $e_i \prec e_j$ whenever $i<j$ and $e_0=0$. Since $(D,\prec)$ has order type $\omega$, so does $(E,\prec)$. We denote the successor function on $(E,\prec)$ by $s$ and the predecessor function by $p$. The function $p$ is only defined on $E_{\succ e_0}$. Since $e_0=0$, we get that $l(c,e_0)=0$ and $r(c,e_0)=1$ for every $c \in (0,1)$. In the isomorphic copy of $\cB$ that we are trying to define, the set $E$ will serve as the copy of $\N$. We will now construct three further sets $A,B,C$ that will be used to define the copy of $\mathcal P(\N)$.

\begin{Def} Define $C \subseteq (0,1)$ to be the set of all $c\in (0,1)$ such that for every $e \in E_{\succ e_0}$ either $l(c,p(e))=l(c,e)$ or $r(c,p(e))=r(c,e)$. Given $c \in [0,1]$ we set
\[
F_c = \{ e \in E_{\succ e_0}  \ : \ l(c,p(e)) \neq l(c,e) \}.
\]
Let $A$ be the set of $c \in C$ such that $F_c$ is neither finite nor co-finite in $E$. Let $B\subseteq D$ be the set of all $d\in D$ such that there is $c \in A$  and $e \in E_{\succ e_0}$ with $d=l(c,e)$.
\end{Def}

\noindent Since $(E,\prec)$ has order type $\omega$, the set $A$ is definable. Note that $F_c \cap E_{\preceq e} = F_{l(c,e)}$ for all $c\in (0,1)$ and $e\in E_{\succ e_0}$. In the isomorphic copy of $\cB$, the set $A$ will correspond to the set of all subsets of $\N$ that are neither finite nor co-finite, while $B$ will correspond to the set of all finite subsets of $\N$.

\begin{Lem}\label{lem:uniqueness} Let $e \in E_{\succ e_0}$ and $a,b \in C\setminus D$. If $F_a\cap E_{\preceq e}= F_b\cap E_{\preceq e}$, then $l(a,e)=l(b,e)$ and $r(a,e)=r(b,e)$.
\end{Lem}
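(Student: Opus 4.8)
The plan is to prove, by induction along the well-order $(E,\prec)$, the following statement which is formally a hair stronger: for every $e\in E$ and all $a,b\in C\setminus D$, if $F_a\cap E_{\preceq e}=F_b\cap E_{\preceq e}$, then $l(a,e)=l(b,e)$ and $r(a,e)=r(b,e)$. The base case $e=e_0$ is immediate, because $l(c,e_0)=0$ and $r(c,e_0)=1$ for every $c\in(0,1)$. For the successor step I fix $e\in E_{\succ e_0}$, write $e'=p(e)$, and take $a,b\in C\setminus D$ with $F_a\cap E_{\preceq e}=F_b\cap E_{\preceq e}$. Intersecting the hypothesis with $E_{\preceq e'}$ and applying the inductive hypothesis produces common values $L:=l(a,e')=l(b,e')$ and $R:=r(a,e')=r(b,e')$; since $a,b\notin D$ these satisfy $L<a<R$ and $L<b<R$. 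Moreover $e\in F_a$ if and only if $e\in F_b$, since $e\in E_{\preceq e}$.

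The heart of the matter is the sub-claim: \emph{if $r(a,e)=r(b,e)$, then $l(a,e)=l(b,e)$; symmetrically, if $l(a,e)=l(b,e)$, then $r(a,e)=r(b,e)$.} To see the first half, suppose $r(a,e)=r(b,e)=:R^\ast$ while $l(a,e)\neq l(b,e)$; by the symmetry of the hypotheses I may assume $l(a,e)<l(b,e)$. Then $l(b,e)\in D_{\preceq e}$ and $l(b,e)\le b<r(b,e)=R^\ast=r(a,e)$, so $l(b,e)$ lies strictly between $l(a,e)$ and $r(a,e)$. If $l(b,e)\le a$, this contradicts $l(a,e)$ being the $<$-largest element of $D_{\preceq e}$ in $[0,a]$; if $l(b,e)>a$, it contradicts $r(a,e)$ being the $<$-least element of $D_{\preceq e}$ in $[a,1]$. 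The second half is the same argument with left and right interchanged.

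It then remains to run a case split on whether $e\in F_a$. If $e\notin F_a$ (equivalently $e\notin F_b$), then by definition of $F$ we have $l(a,e)=l(a,e')=L=l(b,e')=l(b,e)$, and the sub-claim delivers $r(a,e)=r(b,e)$. If $e\in F_a$ (equivalently $e\in F_b$), then $l(a,p(e))\neq l(a,e)$, so the defining property of $C$ forces $r(a,e)=r(a,e')=R$, and likewise $r(b,e)=R$; now the sub-claim delivers $l(a,e)=l(b,e)$. In either case both desired equalities hold, closing the induction.

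I expect the only genuinely non-formal point to be the sub-claim, i.e.\ the observation that a discrepancy in the still-undetermined endpoint manufactures an element of $D_{\preceq e}$ strictly inside the bracket $(l(a,e),r(a,e))$, contradicting its extremal definition. Apart from this, the hypotheses enter in exactly two places: the definition of $C$ is precisely what converts ``the left endpoint moved from $e'$ to $e$'' into ``the right endpoint did not move'', which pins down $R$; and $a,b\notin D$ is what makes all the strict inequalities go through. (The splitting property of $E$ is not actually needed for this lemma.)
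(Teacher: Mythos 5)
Your proof is correct and follows essentially the same route as the paper: the paper argues by taking a $\prec$-minimal counterexample $e$ while you run a direct induction along $(E,\prec)$, but both hinge on the same two ingredients, namely the observation that equality of one of the two best approximations at level $e$ forces equality of the other (your sub-claim, which is the contrapositive of the paper's ``there is $d\in D_{\preceq e}$ with $a<d<b$, hence both $l$ and $r$ differ''), and the case split on whether $e\in F_a$ using the definition of $C$. Your remark that the splitting property of $E$ is not needed here is also accurate.
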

\begin{proof} With loss of generality we may assume $a<b$. Suppose that $l(a,e)\neq l(b,e)$ or $r(a,e)\neq r(b,e)$. Then there is $d \in D_{\preceq e}$ such that $a< d< b$. Since $(E,\prec)$ has order type $\omega$, we can assume that $e$ is $\prec$-minimal in $E_{\succ e_0}$ with this property. Together with $l(a,e_0)=l(b,e_0)=0$ and $r(a,e_0)=r(b,e_0)=1$, this gives
\begin{equation}\label{lem:eq1}
l(a,p(e))=l(b,p(e)) \hbox{ and } r(a,p(e))=r(b,p(e)).
\end{equation}
Thus $p(e) \prec d \preceq e$. But then
\begin{equation}\label{lem:eq2}
l(a,e)\neq l(b,e) \hbox{ and } r(a,e)\neq r(b,e).
\end{equation}
Since $F_a\cap E_{\preceq e}= F_b\cap E_{\preceq e}$, we get that either $l(a,p(e))=l(a,e)$ and $l(b,p(e))=l(b,e)$, or $r(a,p(e))=r(a,e)$ and $r(b,p(e))=r(b,e)$. This contradicts \eqref{lem:eq1} and \eqref{lem:eq2}.
\end{proof}

\begin{Lem}\label{lem:exists} For $X \subseteq E_{\succ e_0}$ that is neither finite nor co-finite, there is unique $a\in C$ such that $F_a=X$.
\end{Lem}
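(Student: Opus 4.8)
The plan is to deduce uniqueness quickly and to prove existence by constructing $a$ as the common limit of a nested chain of intervals indexed by $E$, where the splitting property of $E$ is exactly what makes each step of the construction go through. For uniqueness, first note that if $a\in C$ and $F_a=X$ then $a\notin D$: otherwise, since $E$ is cofinal in $(D,\prec)$, we would have $a\in D_{\preceq e}$ and hence $l(a,e)=a$ for all sufficiently $\prec$-large $e\in E$, forcing $F_a$ to be finite, contrary to $X$ being infinite. So any two solutions lie in $C\setminus D$, and Lemma~\ref{lem:uniqueness} --- applied with $e$ ranging over $E_{\succ e_0}$ and using $l(a,e_0)=l(b,e_0)=0$ --- gives $l(a,e)=l(b,e)$ for all $e\in E$. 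Since $E$ is cofinal in $(D,\prec)$ and $D$ is dense in $[0,1]$, one then has $a=\sup_{e\in E}l(a,e)=\sup_{e\in E}l(b,e)=b$.

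For existence, fix $X\subseteq E_{\succ e_0}$ that is neither finite nor co-finite, and write $E=\{e_n\}_{n\in\bN}$ in $\prec$-increasing order, so that $e_{n+1}=s(e_n)$. I would recursively choose $\ell_n,\rho_n\in D_{\preceq e_n}$ so that, writing $I_n:=(\ell_n,\rho_n)$, the following hold for every $n$: (i) $\ell_n<\rho_n$; (ii) $I_n\cap D_{\preceq e_n}=\emptyset$; (iii) $\ell_n\le\ell_{n+1}$ and $\rho_{n+1}\le\rho_n$; and (iv) if $e_{n+1}\in X$ then $\ell_{n+1}\ne\ell_n$ and $\rho_{n+1}=\rho_n$, while if $e_{n+1}\notin X$ then $\ell_{n+1}=\ell_n$ and $\rho_{n+1}\ne\rho_n$. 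The base case is $\ell_0=0$, $\rho_0=1$, which works since $D_{\preceq e_0}=\{1,0\}$.

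The recursive step is the heart of the argument. Given $\ell_n,\rho_n$, the crucial point is that $D$ splits between $e_n$ and $e_{n+1}$: applying this to the pair $\ell_n<\rho_n$ (both of which lie in $D_{\preceq e_n}$) produces some $d\in D$ with $e_n\prec d\prec e_{n+1}$ and $\ell_n<d<\rho_n$, so the finite set $S:=D_{\preceq e_{n+1}}\cap I_n$ is nonempty. If $e_{n+1}\in X$, set $\ell_{n+1}:=\max S$ and $\rho_{n+1}:=\rho_n$; if $e_{n+1}\notin X$, set $\ell_{n+1}:=\ell_n$ and $\rho_{n+1}:=\min S$. Then (i)--(iv) are immediate: $\ell_{n+1}<\rho_{n+1}$ because $S\subseteq I_n=(\ell_n,\rho_n)$, and $I_{n+1}\cap D_{\preceq e_{n+1}}=\emptyset$ because $I_{n+1}\cap D_{\preceq e_n}\subseteq I_n\cap D_{\preceq e_n}=\emptyset$ while $I_{n+1}$ avoids $S$ by the choice of $\max S$ or $\min S$.

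Finally, put $a:=\lim_n\ell_n=\lim_n\rho_n$; these two monotone limits coincide, for if not, density of $D$ would supply some $d\in D$ lying in $I_n$ for every $n$, yet $d\in D_{\preceq e_m}$ for some $m$ by cofinality of $E$, contradicting (ii). Since both $X$ and $E_{\succ e_0}\setminus X$ are infinite, $\ell_n$ strictly increases infinitely often and $\rho_n$ strictly decreases infinitely often, so $0<a<1$ and $\ell_n<a<\rho_n$ for all $n$; combined with (ii) and $\ell_n,\rho_n\in D_{\preceq e_n}$ this gives $l(a,e_n)=\ell_n$, $r(a,e_n)=\rho_n$, and $a\notin D$. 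Now (iv) reads $e_{n+1}\in F_a\iff e_{n+1}\in X$, so $F_a=X$; and at each step one of $l(a,\cdot),r(a,\cdot)$ is unchanged, so $a\in C$. The only real subtlety throughout is the recursive step: one must verify that the splitting hypothesis on $E$ is precisely strong enough to keep $S$ nonempty at every stage while preventing the interval $I_n$ from collapsing prematurely; everything else is bookkeeping.
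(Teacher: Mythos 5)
Your proof is correct and follows essentially the same strategy as the paper: existence via a nested sequence of intervals shrinking from the left or right according to whether $e_{n+1}\in X$, with the splitting property guaranteeing a new element of $D_{\preceq e_{n+1}}$ inside the current interval at each step, and uniqueness via Lemma~\ref{lem:uniqueness} together with density of $D$. The only difference is cosmetic bookkeeping --- you track open intervals by their endpoints $(\ell_n,\rho_n)$ and use infinitude of $X$ and its complement to keep $a$ strictly interior, whereas the paper uses closed intervals and a minimality-of-$m$ contradiction for the same purpose.
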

\begin{proof} The uniqueness follows immediately from Lemma \ref{lem:uniqueness} and the density of $D$. It is left to show the existence of such an $a$. We will construct a nested sequence $\{I _n \}_{n \in \bN}$ of non-empty closed subintervals of $[0,1]$ such that for every $c \in \Int(I_n)$
\begin{itemize}
\item [(I)] $I_n = [l(c,e_n),r(c,e_n)]$,
\end{itemize}
and if $n > 0$,
\begin{itemize}
\item [(II)] $l(c,e_{n-1}) \neq l(c,e_n)$ if and only if $e_n \in X$,
\item [(III)] $l(c,e_{n-1})=l(c,e_n)$ or $r(c,e_{n-1})=r(c,e_n)$.
\end{itemize}

\noindent Let $I_0=[0,1]$. Since $e_0=0$, $I_0$ satisfies (I). Now suppose that we have already constructed an interval $I_{n - 1}$ satisfying (I)-(III). By (I) there are $d_1,d_2 \in D_{\preceq e_{n - 1}}$ such that $I_{n-1} = [d_1,d_2]$.\\

\noindent First suppose that $e_n \in X$. Let $d_3$ be the $\leq$-maximal element of $D_{\preceq e_{n}} \cap (d_1,d_2)$. Since $E$ is a splitting set, such that a $d_3$ exists. By (I) for $I_{n-1}$, we necessarily have $d_2 \prec d_3$.
If $c \in (d_3,d_2)$, then it is easy to see that $l(c,e_{n}) = d_3$. For such $c$, $l(c,e_{n-1}) \neq  l(c,e_{n})$. Since $d_3$ was chosen to be the $\leq$-maximal element of $D_{\preceq e_{n}} \cap (d_1,d_2)$, we also get that $r(c,e_n)=r(c,e_{n-1})=d_2$ for all $c \in (d_3,d_2)$. Therefore set $I_{n}= [d_3,d_2]$.\\

\noindent Now consider the case that $e_n \notin X$. Let $d_3$ be the $\leq$-minimal element of $D_{\preceq e_{n}} \cap (d_1,d_2)$. Since $E$ is a splitting set, such that a $d_3$ exists, and $d_1 \prec d_3$ by (I) for $I_{n-1}$.
If $c \in (d_1,d_3)$, then it is easy to see that $r(c,e_{n}) = d_3$. Since $d_3$ was chosen to be the $\leq$-minimal element of $D_{\preceq e_{n}} \cap (d_1,d_2)$, we also get that $l(c,e_{n-1})=l(c,e_n)=d_1$ for such $c$.
Therefore set $I_{n}= [d_1,d_3]$.\\

\noindent It follows directly from the construction that $I_n$ satisfies (I)-(III). By completeness of $\bR$, $\bigcap_{n\in \N} I_n\neq \emptyset$. Let $a\in \bigcap_{n\in \N} I_n$.
In order to show that $F_a=X$ and $a\in C$, it is by (II) and (III) enough to show that $a\in \Int(I_n)$ for every $n\in \N$. Suppose not. Then there is a minimal $m\in \N$ such that $a=l(c,e_m)$ or $a=r(c,e_m)$ for every $c \in \Int(I_m)$.
Let us consider the case that $a=l(c,e_m)$. Since the sequence of intervals is nested, $a$ has to be the left endpoint of every interval $I_n$ for $n\geq m$. Therefore, $a = l(c,e_n)$ for every $c \in \Int(I_n)$ and every $m\geq n$. By (II), $e_n\notin X$ for every $n\geq m$. This contradicts the assumption that $X$ is not finite. In the case that $a=r(c,e_m)$, one can reach a contradiction against the assumption that $X$ is not co-finite in a similar way.
\end{proof}

\begin{Cor}\label{cor:exists} For a finite $X \subseteq E_{\succ e_0}$ there is a unique $d\in B$ with $X = F_d$.
\end{Cor}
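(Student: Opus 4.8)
The plan is to obtain existence by reducing to Lemma~\ref{lem:exists}, and uniqueness by reducing to Lemma~\ref{lem:uniqueness}.

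For existence, given a finite $X\subseteq E_{\succ e_0}$ I would first fix $e\in E_{\succ e_0}$ with $X\subseteq E_{\preceq e}$ (the $\prec$-maximum of $X$, or $s(e_0)$ if $X=\emptyset$), and then choose a $Y\subseteq E_{\succ e_0}$ that is neither finite nor co-finite and satisfies $Y\cap E_{\preceq e}=X$ --- e.g.\ by adding to $X$ every second element of $E$ lying $\succ$-above $e$. Lemma~\ref{lem:exists} then furnishes a unique $a\in C$ with $F_a=Y$, and since $Y$ is neither finite nor co-finite this $a$ lies in $A$. Setting $d:=l(a,e)$, the definition of $B$ gives $d\in B$, and the identity $F_c\cap E_{\preceq e}=F_{l(c,e)}$ (noted before Lemma~\ref{lem:uniqueness}) gives $F_d=F_a\cap E_{\preceq e}=Y\cap E_{\preceq e}=X$.

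For uniqueness, suppose $d,d'\in B$ with $F_d=F_{d'}=X$, and write $d=l(c,e)$, $d'=l(c',e')$ with $c,c'\in A$ and $e,e'\in E_{\succ e_0}$, arranged so that $e\preceq e'$. I would first observe that $A\subseteq C\setminus D$: if $c\in D$ then, as $E$ is cofinal in $(D,\prec)$, $l(c,\cdot)$ equals $c$ on a $\prec$-final segment of $E$, so $F_c$ is finite. Since $F_d=F_c\cap E_{\preceq e}\subseteq E_{\preceq e}$, we have $X\subseteq E_{\preceq e}$ and hence $F_c\cap E_{\preceq e}=X=F_{c'}\cap E_{\preceq e}$; Lemma~\ref{lem:uniqueness}, applied with $a=c$ and $b=c'$, then yields $l(c,e)=l(c',e)$. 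Finally I would show $l(c',e)=l(c',e')$, because otherwise the $\prec$-least $\hat e\in E_{\succ e}$ with $l(c',\hat e)\neq l(c',e)$ would satisfy $\hat e\preceq e'$ and $l(c',p(\hat e))\neq l(c',\hat e)$, i.e.\ $\hat e\in F_{c'}\cap E_{\preceq e'}=X\subseteq E_{\preceq e}$, contradicting $\hat e\succ e$. This gives $d=l(c,e)=l(c',e)=l(c',e')=d'$.

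I expect the uniqueness half to be the only delicate point: one has to notice that $A$ avoids $D$ so that Lemma~\ref{lem:uniqueness} is applicable, and then rule out a ``jump'' of $l(c',\cdot)$ strictly between $e$ and $e'$ using that all such jump points would lie in $F_{c'}\cap E_{\preceq e'}=X\subseteq E_{\preceq e}$.
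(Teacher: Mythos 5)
Your proposal is correct and follows essentially the same route as the paper: existence is obtained by embedding $X$ as an initial segment of an infinite, co-infinite $Y$, invoking Lemma~\ref{lem:exists}, and truncating with $l(\cdot,e)$; uniqueness is reduced to Lemma~\ref{lem:uniqueness} by first showing $l(c,e)=l(c',e)$ and then showing $l(c',\cdot)$ cannot jump strictly between $e$ and $e'$. You do supply two small pieces of bookkeeping that the paper leaves implicit: you handle $X=\emptyset$ (which has no $\prec$-maximum) by taking $e=s(e_0)$, and you explicitly verify $A\subseteq C\setminus D$ before applying Lemma~\ref{lem:uniqueness}, which is stated only for elements of $C\setminus D$. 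These are genuine, if minor, improvements in rigor over the paper's own write-up.
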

\begin{proof} We first show the existence of such $d$. Suppose $X \subseteq E_{\succ e_0}$ is finite. Take $Y\subseteq E_{\succ e_0}$ that contains $X$ as an initial segment and is neither finite or co-finite. By Lemma \ref{lem:exists} there is $c \in C$ such that $F_c = Y$. Let $e \in E$ be the $\prec$-maximum of $X$. Then $X=F_{c} \cap E_{\preceq e} = F_{l(c,e)}$.\\

\noindent It is left to prove uniqueness. Suppose there is another $d' \in B$ such that $F_{d'} = X$. Let $c'\in A$ and $e' \in E$ such that $l(c',e')=d'$. Since $e$ is the $\prec$-maximum of $X$, $e'\succeq e$. Since $F_{l(c',e')}=X$, there is no $e'' \in F_{c'}$ with $e \prec e'' \preceq e'$. Thus $l(c',e')=l(c',e)$. Since $F_{c} \cap E_{\preceq e} = X = F_{c'} \cap E_{\preceq e}$, we have $l(c',e) = l(c,e)$ by Lemma \ref{lem:uniqueness}. Thus $d=d'$.
\end{proof}

\begin{proof}[Proof of Theorem A]
We will now construct the isomorphic copy of $\cB$. Let $P := \{0\} \times A \cup \{0,1 \} \times B$. For $p=(p_1,p_2)\in P$ and $e \in E_{\succ e_0}$ we say $\epsilon(e,p)$ holds whenever either $p_1=0$ and $e \in F_{p_2}$, or $p_1=1$ and $e \notin F_{p_2}$. By Lemma \ref{lem:exists} and Corollary \ref{cor:exists} $(E_{\succ e_0},P,\epsilon,s)$ is an isomorphic copy of $\cB$.
\end{proof}



\section{Proof of Theorem B}
Let $K \subseteq \bR$ be a Cantor set; that is $K$ is a subset of $\bR$ that is nonempty, compact, and has neither interior nor isolated points. We show that $(\bR,<,+,K)$ defines $\cB$. In this section \emph{``definable''} means \emph{``definable in $(\bR,<,+,K)$''}. Let $C \subseteq K$ be the set of right endpoints of bounded complementary intervals of $K$. In between any two complementary intervals there is a third. Therefore $(C,<)$ is a countable dense linear order without endpoints and thus has order type $(\mathbb{Q},<)$. Let $L\subseteq K$ be the set of all elements of $K$ that are not in  any closure of a complementary interval of $K$. Both $C$ and $L$ are definable. Note that $(C\cup L,<)$ is a dense linear order without endpoints that has the least-upper-bound property\footnote{A linear order $(X,\prec)$ has the \textbf{least-upper-bound property} if every non-empty subset of $X$ with an upper bound has a supremum in $X$.}. Moreover, $C\cup L$ has a countable dense subset, namely $C$. A linear order with these properties is order-isomorphic to $(\bR,<)$ (see e.g. Jech \cite[p.32]{Jech}). Thus $(C\cup L,<)$ is order-isomorphic to $(\bR,<)$.
By Theorem A and since $C$ is dense in $C\cup L$, it is only left to put a definable order $\prec$ on $C$ with order type $\omega$.
We let $\tau: C \to \bR$ be the definable function that maps the right endpoint of each complementary interval of $K$ to the length of that interval.
Note that for each $\delta > 0$ there are only finitely many $c \in C$ such that $\tau(c) \geq \delta$.
Let $a, b \in C$.
We declare $a \prec b$ if $\tau(a) > \tau(b)$ or if $\tau(a) = \tau(b)$ and $a > b$.
This is a linear order, and if $a \prec b$, then $\tau(a) \geq \tau(b)$. So each initial segment of $(C,\prec)$ has only finitely many elements.
Thus $(C, \prec)$ has order type $\omega$.


\section{Proof of Theorem C}
For the remainder of this paper $\mathfrak{R}$ is an expansion of $(\bR,<,+)$ that does not define $\cB$, and \emph{``definable''} means \emph{``definable in $\mathfrak R$''}.  The goal of this section is to show that there is no definable map $f: D \to \bR$ such that $D \subseteq \bR^k$ is discrete and $f(D)$ is somewhere dense.

\begin{Lem}\label{family1}
Let $\mathcal{A} = \{A_t : t > 0\}$ be a definable family of finite subsets of $\bR$ which is either increasing or decreasing.
Then the union of $\mathcal{A}$ is nowhere dense.
\end{Lem}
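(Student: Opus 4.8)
The plan is to argue by contradiction: suppose the union $U = \bigcup_{t > 0} A_t$ is somewhere dense, so $\closure{U}$ contains an interval $I$. I will derive from this a definable dense subset of $I$ that can be definably well-ordered with order type $\omega$, which by Theorem A lets $\mathfrak R$ define $\cB$ --- contradicting the standing assumption of this section. The point is that, because each $A_t$ is \emph{finite} and the family is monotone, the sets $A_t$ exhaust $U$ in a controlled way, and this exhaustion gives us a canonical "time of first appearance" for each point of $U$, which is exactly the kind of data that produces an $\omega$-order.

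First I would reduce to the increasing case: if $\mathcal A$ is decreasing, replace $A_t$ by $A_{1/t}$ (or reparametrize $t \mapsto -t$ via $t \mapsto 1/t$ on $\bR^{>}$), so without loss of generality $A_t \subseteq A_{t'}$ for $t < t'$ and $U = \bigcup_{n \in \bN} A_n$. For $x \in U$ define $\nu(x) = \inf\{ t > 0 : x \in A_t\}$; since each $A_t$ is finite and the family is increasing, for each fixed $t$ the set $\{x \in U : \nu(x) \le t\} \subseteq A_t$ is finite, so $\nu$ has finite fibers below any bound and each level set $\nu^{-1}(s)$ is finite. Now define an order $\prec$ on $U$ by: $x \prec y$ iff $\nu(x) < \nu(y)$, or $\nu(x) = \nu(y)$ and $x < y$. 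This is a definable linear order on the definable set $U$, and every initial segment $\{y : y \preceq x\}$ is contained in $\{y : \nu(y) \le \nu(x)\}$, hence finite --- so $(U,\prec)$ has order type $\omega$ (after noting $U$ is infinite, which holds since $U$ is dense in $I$).

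Finally I would invoke Theorem A with $D = U$ and this $\prec$: $U$ is dense in the open interval $I \subseteq \closure{U}$ (here I use that $U \cap I$ is dense in $I$, and I work with $U \cap I$ in place of $U$, which is still definable, still has finite $\nu$-fibers below any bound, and still carries the induced $\omega$-order), so $(\bR, <, U\cap I, \prec)$ defines $\cB$, hence so does $\mathfrak R$. This contradicts the hypothesis that $\mathfrak R$ does not define $\cB$, so $U$ must be nowhere dense.

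The main obstacle is making sure the "time of first appearance" function $\nu$ is genuinely definable and genuinely has finite fibers on bounded sets --- i.e. that the infimum is attained or at least behaves well. If the family is indexed so that membership $x \in A_t$ is not closed in $t$, the infimum might not be a minimum and a point could fail to lie in $A_{\nu(x)}$; one fixes this by replacing $\nu(x)$ with a value that \emph{is} attained, e.g. using that $\{t : x \in A_t\}$ is a definable subset of $\bR^{>}$ and, in the increasing case, is upward closed, so its infimum is well-defined and the finiteness of each $A_t$ still forces $\{x : \nu(x) < t\} \subseteq A_t$ to be finite --- which is all that is needed for the initial segments of $\prec$ to be finite. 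A secondary point to check is simply that $U$ being somewhere dense does force $U$ itself (not merely $\closure U$) to be dense in some subinterval, which is immediate from the definition of "somewhere dense."
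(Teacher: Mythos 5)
Your proof is correct and follows essentially the same route as the paper's: both define a ``seniority'' function on the union (you use the first-appearance time $\nu(x) = \inf\{t : x \in A_t\}$ after reducing to the increasing case; the paper works directly with the decreasing case via $\tau(a) = \sup\{t : a \in A_t\}$), use it together with $<$ to give a definable linear order with order type $\omega$, and then invoke Theorem~A. You also correctly identify and resolve the only subtlety --- that the $\inf$/$\sup$ need not be attained but finiteness of initial segments still follows by comparing against $A_t$ for $t$ on the appropriate side of $\nu(x)$.
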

\begin{proof}
We suppose that $\mathcal{A}$ is decreasing.
A slight modification of the argument may be used to prove the increasing case.
We show that $D := \bigcup_{t > 0} A_t$ admits a definable linear order $\prec$ with order type $\omega$.
The statement then follows by applying Theorem A.
Let $\tau : D \to \bR \cup \{\infty\}$ be given by $ \tau(a) := \sup\{t > 0 : a \in A_t\}$.
Given $a, a' \in D$ we declare $a \prec a'$ if $\tau(a) > \tau(a')$ or if $\tau(a) = \tau(a')$ and $a < a'$.
Then $(D, \prec)$ is linear.
If $a \in D$, then $D_{\prec a}$ is a subset of $A_t$ for any $t < \tau(a)$.
As every $A_t$ is finite, this implies that $D_{\prec a}$ is finite for every $a \in A$.
Thus $(D,\prec)$ has order type $\omega$.
\end{proof}
\noindent We first prove a simple lemma:
\begin{Lem}\label{core}
Let $\mathcal M$ be an expansion of $(\bR,<,+)$.
One of the following holds:
\begin{enumerate}
\item Every bounded nowhere dense definable subset of $\mathbb{R}$ is finite.
\item There is a bounded discrete definable $D \subseteq \mathbb{R}^>$ whose closure is $D \cup \{0\}$.
\end{enumerate}
\end{Lem}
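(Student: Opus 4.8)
The plan is to assume that (1) fails and deduce (2). So suppose there is a bounded nowhere dense definable $N \subseteq \bR$ that is infinite. The idea is to extract from $N$, by definable operations, a discrete set accumulating only at a single point, which we can then normalize to $0$. First I would pass to the set of "left endpoints from the right" or, more simply, work with the closure $\cl(N)$, which is again bounded, nowhere dense, definable, and infinite (since $N$ is infinite, $\cl(N)$ is infinite). A bounded infinite closed set has an accumulation point $a \in \cl(N)$; I would like to choose such an $a$ definably. The natural candidate is $a := \inf\{ x \in \cl(N) : x \text{ is an accumulation point of } \cl(N)\}$, i.e.\ the infimum of the derived set $\cl(N)'$; this derived set is definable, non-empty (bounded infinite closed sets have accumulation points), and closed, so $a$ is well-defined, definable, and lies in $\cl(N)'$.

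Next, I would isolate the part of $\cl(N)$ just to the right of $a$. Since $a$ is an accumulation point of $\cl(N)$ but $a$ is the \emph{least} accumulation point, for small $\varepsilon > 0$ the set $\cl(N) \cap (a, a+\varepsilon)$ is infinite but has no accumulation point other than $a$ itself; hence $\cl(N) \cap (a, a+\varepsilon)$ is a discrete subset of $(a, a+\varepsilon)$ whose closure is contained in $\cl(N) \cap [a, a+\varepsilon]$ and whose only possible accumulation point is $a$. (If instead $\cl(N) \cap (a,a+\varepsilon)$ is finite for all small $\varepsilon$, then $a$ accumulates $\cl(N)$ only from the left, and one runs the symmetric argument with $(a-\varepsilon,a)$ and a reflection $x \mapsto -x$; I would handle this by a short case split, or by noting $a$ must accumulate $\cl(N)$ from at least one side.) Fix such an $\varepsilon$ — say by taking $\varepsilon$ small enough that $(a,a+\varepsilon) \cap \cl(N)'$ is empty, which is possible and can be pinned down definably, e.g.\ $\varepsilon := \tfrac12\,\mathrm{dist}\big(a, \cl(N)' \setminus \{a\}\big)$ if that set is non-empty, and $\varepsilon := 1$ otherwise. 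Set $D_0 := \cl(N) \cap (a, a+\varepsilon)$. Then $D_0$ is definable, discrete, bounded, contained in $\bR^>$ after a translation, and $\cl(D_0) = D_0 \cup \{a\}$.

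Finally, I would translate: put $D := \{ x - a : x \in D_0\}$. This is definable (since $a$ is definable), discrete, bounded, contained in $\bR^{>}$, and its closure is $D \cup \{0\}$, which is exactly condition (2). This completes the argument.

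The main obstacle is the definability bookkeeping rather than any topology: we must make sure that every object we use — the derived set $\cl(N)'$, the least accumulation point $a$, the radius $\varepsilon$ — is genuinely definable in $\mathcal{M}$ with parameters, using only $(\bR,<,+)$-operations together with $N$. The derived set is definable because "$x$ is an accumulation point of the definable set $S$" is a first-order condition ($\forall \delta > 0\ \exists y \in S\ (0 < |y - x| < \delta)$, and $|y-x|<\delta$ is expressible from $<$ and $+$); infima of non-empty bounded definable sets exist and are definable by completeness of $\bR$; and the distance function to a definable set is definable. The only genuinely delicate point is confirming that $a$ accumulates $\cl(N)$ from at least one side so that the construction of $D_0$ yields an infinite — hence, together with discreteness, genuinely interesting — set; this is immediate since $a$ is itself an accumulation point of $\cl(N)$, and one then selects whichever side witnesses this, applying $x \mapsto -x$ if it is the left side.
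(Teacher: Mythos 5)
There is a genuine gap at the central step. You set $a := \inf\,\cl(N)'$, the least accumulation point of $\cl(N)$, and claim that for small $\varepsilon > 0$ the set $\cl(N) \cap (a, a+\varepsilon)$ ``has no accumulation point other than $a$ itself.'' This does not follow from $a$ being \emph{least} in $\cl(N)'$: nothing prevents $\cl(N)'$ from accumulating at its own infimum from above. Concretely, if $N$ is such that $\cl(N)$ is a Cantor set, then $\cl(N)$ is perfect, $\cl(N)' = \cl(N)$, and for every $\varepsilon > 0$ the set $\cl(N) \cap (a, a+\varepsilon)$ is again a (nonempty, uncountable) Cantor-like set, hence not discrete. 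In the same situation $\cl(N)' \setminus \{a\}$ has infimum $a$, so your proposed radius $\varepsilon = \tfrac12\,\mathrm{dist}(a, \cl(N)' \setminus \{a\})$ equals $0$ and the construction of $D_0$ collapses. Iterating the derived-set operation does not rescue this, since for a perfect set the Cantor--Bendixson process makes no progress, and in any case a transfinite iteration is not a definable operation here.

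The paper sidesteps this entirely with a different extraction: instead of zooming in on an accumulation point of $\cl(N)$, it looks at the \emph{lengths of the bounded complementary intervals} of $\cl(N)$. Call this set $D$. Boundedness of $\cl(N)$ forces only finitely many complementary intervals of length $\geq \delta$ for each $\delta > 0$, so $D$ is bounded, discrete, and has no limit point except possibly $0$; and $\cl(N)$ being infinite and nowhere dense forces infinitely many complementary intervals with lengths tending to $0$, so $0$ really is a limit point. This works uniformly, whether $\cl(N)$ has isolated points or is perfect, and the set of lengths is plainly definable from $<$, $+$, and $N$. You should replace your ``least accumulation point'' device with this complementary-interval-length device, or otherwise explicitly handle the perfect (Cantor-set) case, which your current argument cannot reach.
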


\begin{proof}
Suppose that $(1)$ above does not hold.
Let $C$ be a bounded infinite nowhere dense subset of $\mathbb{R}$.
After replacing $C$ with its closure if necessary we assume that $C$ is closed.
We let $D \subseteq \mathbb{R}^>$ be the set of the lengths of the bounded complementary intervals of $C$.
As $C$ is bounded, for each $\delta > 0$ there are only finitely many complementary intervals with length at least $\delta$.
This implies that $D$ is bounded and discrete and that $D$ has no limit points other then $0$.
As $C$ is infinite and nowhere dense it has infinitely many complementary intervals, and therefore for each $\delta > 0$ there is a complementary interval of length at most $\delta$.
This implies that $0$ is a limit point of $D$.
\end{proof}

\begin{Lem}\label{family2}
Let $\mathcal{A} = \{A_t : t > 0\}$ be an increasing or decreasing definable family of nowhere dense subsets of $\bR$.
Then the union of $\mathcal{A}$ is nowhere dense.
\end{Lem}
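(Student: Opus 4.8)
The plan is to reduce the general nowhere-dense case to the finite case already handled in Lemma \ref{family1}, using Lemma \ref{core} as the bridge. First I would dispose of the trivial case: if option (1) of Lemma \ref{core} holds for $\mathfrak R$, then every bounded nowhere dense definable subset of $\bR$ is finite, so each $A_t$ is locally finite; intersecting the family with a fixed bounded interval and applying Lemma \ref{family1} (after noting the restricted family is still monotone and has finite members) shows the union meets every bounded interval in a nowhere dense set, hence the whole union is nowhere dense. So I may assume option (2) of Lemma \ref{core}: there is a bounded discrete definable $D_0 \subseteq \bR^>$ with $\closure{D_0} = D_0 \cup \{0\}$. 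The point of having such a $D_0$ is that it furnishes a definable ``ruler'' — a discrete set accumulating only at $0$ — which can be used to extract, from each nowhere dense $A_t$, a uniformly definable finite approximation.

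The key step is the following: I want to replace the family $\{A_t\}$ of nowhere dense sets by a family of \emph{finite} sets whose union has the same closure (or at least a closure with the same interior). Since each $A_t$ is nowhere dense, $\closure{A_t}$ has empty interior, so its set of complementary intervals is infinite (on any bounded window where $A_t$ is infinite), and the lengths of those complementary intervals accumulate only at $0$. Using $D_0$, I would define, for each pair $(t,n)$ with $n \in \bN$ coded by an element of $D_0$, the set $A_{t,n}$ of, say, right endpoints of those bounded complementary intervals of $\closure{A_t}$ (intersected with a fixed bounded interval) whose length is at least the $n$-th element of $D_0$. Each $A_{t,n}$ is finite, the doubly-indexed family is definable, and it is monotone in each variable separately given that $\{A_t\}$ is monotone. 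Crucially, $\bigcup_n A_{t,n}$ is dense in $\closure{A_t}$ within that window, and $\bigcup_t \bigcup_n A_{t,n}$ is therefore dense in $\bigcup_t \closure{A_t} \cap (\text{window})$; so if the latter union had interior, so would the former. Hence it suffices to show $\bigcup_{t,n} A_{t,n}$ is nowhere dense. Re-indexing the two monotone parameters into a single one (e.g. via $A_s := \bigcup\{A_{t,n} : t \le s,\ n\text{-th element of } D_0 \ge 1/s\}$, or by the usual trick of composing monotone reparametrizations), I obtain a single monotone definable family of finite sets, and Lemma \ref{family1} finishes the job. As before, working window by window over a countable exhaustion of $\bR$ by bounded intervals upgrades ``nowhere dense on each window'' to ``nowhere dense''.

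The main obstacle I anticipate is the bookkeeping in the reduction step: making sure the doubly-indexed family $\{A_{t,n}\}$ is genuinely definable (this is where one must be careful that ``the $n$-th element of $D_0$'' is definable uniformly in $n$ — it is, because $D_0$ is discrete with a single limit point $0$, so $n \mapsto$ ($n$-th largest element of $D_0$) is a definable bijection between an initial segment of a definable copy of $\bN$ and $D_0$), and that the collapse of the two monotone indices into one preserves both monotonicity and finiteness of the members. A secondary subtlety is that $\closure{A_t}$ need not be bounded, so one genuinely must localize to a bounded window before talking about ``lengths of complementary intervals'' — but since nowhere density is a local property this costs nothing. Once the finite-approximation family is in hand, everything is an immediate appeal to Lemma \ref{family1} and Theorem A.
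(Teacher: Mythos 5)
Your reduction strategy (use Lemma~\ref{core} to get a discrete $D_0 \subseteq \bR^{>}$ accumulating at $0$, extract finite approximations of each $\closure{A_t}$ from endpoints of long complementary intervals, and feed the result to Lemma~\ref{family1}) is the right general shape, but the proposal has a genuine gap at its central step: the claim that $A_{t,n}$ \emph{``is monotone in each variable separately given that $\{A_t\}$ is monotone''} is false. Suppose, say, the family is decreasing and $t' < t$, so $\closure{A_t} \subseteq \closure{A_{t'}}$. If $b$ is the right endpoint of a complementary interval $(a,b)$ of $\closure{A_t}$, it is entirely possible that $\closure{A_{t'}}$ accumulates at $b$ from the left, in which case $b$ is not the right endpoint of \emph{any} complementary interval of $\closure{A_{t'}}$; and even when it is, the corresponding interval may be much shorter and hence fail the length threshold $d_n$. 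So neither $A_{t,n} \subseteq A_{t',n}$ nor a length-preserving version of it holds. Without this monotonicity, the subsequent step of collapsing $(t,n)$ into a single parameter has nothing to stand on. The fallback you offer, $A_s := \bigcup\{A_{t,n} : t \le s,\ d_n \ge 1/s\}$, has a second defect: the union in $t$ runs over a \emph{continuum} of parameters, so there is no reason for $A_s$ to be finite, and Lemma~\ref{family1} (about finite members) does not apply.

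The paper resolves exactly this difficulty by \emph{not} trying to make the finite approximations inherit monotonicity from $\{A_t\}$. Instead, working only with indices $t$ drawn from the bounded discrete set $D$, it picks a single finite $t$-dense subset $B_t \subseteq A_t$ for each $t \in D$ (the sets $B_t$ themselves are not nested), and then \emph{forces} monotonicity by setting $F_t := \bigcup_{s \in D,\, s \ge t} B_s$. The key is that this union is over only finitely many $s$, because $D$ is bounded and discrete with $0$ as its sole limit point; so each $F_t$ remains finite, the family $\{F_t : t \in D\}$ is decreasing by construction, Lemma~\ref{family1} applies, and one checks separately that $\bigcup_t F_t$ is dense in $\bigcup_t A_t$. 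You should also note that the increasing case cannot be handled by the same extraction: the paper reduces it to the decreasing case by reparametrizing (if $\bigcup A_t$ were somewhere dense, set $B_t := A_{f(t)+1}$ where $f(t)$ is the infimal $s$ with $A_s$ $t$-dense, producing a \emph{decreasing} family of nowhere dense sets whose union is dense, a contradiction). Your proposal treats both cases symmetrically, which glosses over a genuine asymmetry in the argument.
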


\begin{proof}
We apply Lemma~\ref{core}.
If $(1)$ of Lemma~\ref{core} holds then every member of $\mathcal{A}$ is finite and the lemma follows by applying Lemma~\ref{family1}.
We therefore assume that $(2)$ of Lemma~\ref{core} holds and let $D \subseteq \mathbb{R}^>$ satisfy the conditions of $(2)$.

We first suppose that $\mathcal{A}$ is decreasing.
As $\mathcal{A}$ is decreasing the union of $\mathcal{A}$ equals the union of $\{A_t : t \in D\}$.
It suffices to show that the union of $\mathcal{A}$ is not dense in a fixed bounded open interval $I$.
After replacing $\mathcal{A}$ with $\{ A_t \cap I : t > 0 \}$ if necessary we suppose that each $A_t$ is contained in $I$ and therefore bounded.
After replacing each $A_t$ with its closure if necessary we assume that each member of $\mathcal{A}$ is closed.
For each $\delta > 0$ and $t \in D$ we let $C_{\delta,t} \subseteq A_t$ be the set of endpoints of complementary intervals of $A_t$ of length at least $\delta$.
Each $C_{\delta,t}$ is finite.
The set of endpoints of complementary intervals of $A_t$ is dense in $A_t$. Thus for every $\epsilon > 0$ and $t \in D$ there is a $\delta > 0$ such that if $0 < s < \delta$ then $C_{s,t}$ is $\epsilon$-dense in $A_t$.
Given $t \in D$ we let $g(t)$ be the supremum of all $\delta > 0$ such that $C_{\delta,t}$ is $t$-dense in $A_t$.
Given $t \in D$ we declare $B_t = C_{\frac{1}{2}g(t),t}$.
Then $B_t$ is finite and $t$-dense in $A_t$ for all $t \in D$.
We finally declare
$$ F_t =\bigcup_{s \in D, s \geq t} B_s \quad \text{for all } t \in D.  $$
Each $F_t$ is a finite union of finite sets and therefore finite.
Let $F$ be the union of the $F_t$.
The family $\{F_t : t \in D\}$ is decreasing so Lemma~\ref{family1} implies that $F$ is nowhere dense.
We show that $F$ is dense in the union of $\mathcal{A}$.
It follows that the union of $\mathcal{A}$ is nowhere dense.
It suffices to fix $\delta,t > 0$ and show that $F$ is $\delta$-dense in $A_t$.
After decreasing $\delta$ is necessary we suppose that $\delta < t$ and that $\delta \in D$.
Then $B_\delta$ is $\delta$-dense in $A_\delta$.
As $A_t \subseteq A_\delta$ it follows that $B_\delta$ is $\delta$-dense in $A_t$.
As $B_\delta \subseteq F$ if follows that $F$ is $\delta$-dense in $A_t$.

We now treat the increasing case.
We suppose towards a contradiction that the union of $\mathcal{A}$ is dense in an open interval $I$.
Thus for every $\epsilon > 0$ there is a $t > 0$ such that $A_t$ is $\epsilon$-dense in $I$.
Let $f: \bR^> \to \bR^\geq$ be given by letting $f(\epsilon)$ be the infimum of all $t > 0$ such that $A_t$ is $\epsilon$-dense in $I$.
We let $\{ B_t : t \in D\}$ be the definable family given by $B_t = A_{f(t) + 1}$. Note that $\{B_t : t \in D\}$ is now a \emph{decreasing} definable family of nowhere dense sets.
Each $B_t$ is $t$-dense in $I$, so the union of the $B_t$ is dense in $I$. This yields a contradiction to what we proved above.
\end{proof}
\noindent Then next lemma is an immediate consequence of the previous.
\begin{Lem}\label{family3}
Let $\mathcal{A} = \{A_{s,t} : s,t > 0\}$ be a definable of family of nowhere dense subsets of $\bR$ such that:
\begin{enumerate}
\item $A_{s,t} \subseteq A_{s,t'}$ if $t < t'$,
\item $A_{s, t} \subseteq A_{s',t}$ if $s' < s$.
\end{enumerate}
Then the union of $\mathcal{A}$ is nowhere dense.
\end{Lem}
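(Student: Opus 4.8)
The plan is to reduce the two-parameter family to a one-parameter family and then quote Lemma~\ref{family2} twice. The natural move is to fix the first index and diagonalize over it; the monotonicity hypotheses are arranged precisely so that this produces a family with a single consistent direction of monotonicity.

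First I would fix $s > 0$ and consider the family $\mathcal{A}_s := \{A_{s,t} : t > 0\}$. By hypothesis (1) this family is increasing, so by Lemma~\ref{family2} its union $U_s := \bigcup_{t > 0} A_{s,t}$ is nowhere dense. The remaining issue is that $U_s$ need not be definable as a single set uniformly in $s$ without care, but in fact it is: $U_s = \{x \in \bR : \exists t > 0 \ x \in A_{s,t}\}$, and since $\mathcal{A}$ is a definable family, $\{U_s : s > 0\}$ is a definable family of subsets of $\bR$. Next I would check the monotonicity of $\{U_s : s > 0\}$ in $s$: if $s' < s$ then $A_{s,t} \subseteq A_{s',t}$ for every $t$ by hypothesis (2), so taking unions over $t$ gives $U_s \subseteq U_{s'}$. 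Hence $\{U_s : s > 0\}$ is a \emph{decreasing} definable family of nowhere dense sets, and Lemma~\ref{family2} applies again to show that $\bigcup_{s > 0} U_s$ is nowhere dense. But $\bigcup_{s > 0} U_s = \bigcup_{s,t > 0} A_{s,t}$ is exactly the union of $\mathcal{A}$, so we are done.

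The only point requiring a moment's attention — and the closest thing to an obstacle — is the definability of the intermediate family $\{U_s : s > 0\}$: one must confirm that quantifying out the parameter $t$ keeps things definable, which it does since everything takes place in a fixed structure and $\{A_{s,t}\}$ was assumed definable as a family. Apart from that, the proof is a direct two-fold application of Lemma~\ref{family2}, with hypotheses (1) and (2) supplying respectively the increasing direction for the inner union and the decreasing direction for the outer union.
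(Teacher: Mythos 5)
Your proof is correct and follows essentially the same approach as the paper: reduce to a one-parameter family by taking an inner union, then apply Lemma~\ref{family2} twice. The only surface difference is the order of the iterated union -- you union over $t$ first (taking $U_s = \bigcup_t A_{s,t}$, increasing by (1), then decreasing in $s$ by (2)), while the paper unions over $s$ first (taking $B_t = \bigcup_s A_{s,t}$, decreasing in $s$ by (2), then increasing in $t$ by (1)); both work. (Incidentally, the paper's printed proof cites Lemma~\ref{family1} where it should cite Lemma~\ref{family2}, and misstates the monotonicity direction of $\{B_t\}$; your version quotes the correct lemma with the correct directions.)
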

\begin{proof}
For each $s > 0$ we declare: $B_t := \bigcup_{s > 0} A_{s,t}. $ For each $s > 0$ the family $\{ A_{s,t} : t > 0\}$ is increasing, Lemma~\ref{family1} shows that each $B_t$ is nowhere dense.
As $\{B_t : t > 0\}$ forms a decreasing definable family of nowhere dense sets, it follows from another application of Lemma~\ref{family1} that the union of the $B_t$ is nowhere dense.
As the union of $\mathcal{A}$ is the same as the union of the $B_t$, the union of $\mathcal{A}$ is nowhere dense.
\end{proof}

\begin{proof}[Proof of Theorem C]
Let $D \subseteq \bR^k$ be a definable discrete set and $f: D \to \bR$ be a definable function. We need to show that $f(D)$ is nowhere dense. We let $\|\text{ } \|_1$ be the usual $l_1$ norm on $\bR^k$; that is
$ \| (x_1,\ldots, x_k)\|_1 = \sum_{i = 1}^{k} |x_i|. $
We say that an element $d$ of a set $A \subseteq \mathbb{R}^k$ is \textbf{$s$-isolated} if $\| d - d'\|_1 > s$ for any $d' \in A$ such that $d \neq d'$.
Given $s,t > 0$ we let $D_{s,t}$ be the set of $d \in D$ such that $\|d\|_1 \leq t$ and $d$ is $s$-isolated.
Each $D_{s,t}$ is discrete, closed and bounded.
Therefore each $D_{s,t}$ is finite.
Then $D$ is the union of the $D_{s,t}$.
We let $A_{s,t} = f(D_{s,t})$ for all $s,t > 0$.
The image of $f$ agrees with the union of the $A_{s,t}$.
As each $A_{s,t}$ is finite it follows from Lemma~\ref{family3} that the union of the $A_{s,t}$ is nowhere dense.
\end{proof}

\section{Continuous one-variable functions}
In this section we prove generic local monotonicity and Lipschitz results for continuous definable functions. Recall our standing assumption that $\mathfrak{R}$ is an expansion of $(\bR,<,+)$ that does not define $\mathcal{B}$.
We first prove a simple lemma about one-variable functions.
Let $I \subseteq \bR$ be an open interval and $g: I \to \bR$ be a function.
The \textbf{oscillation} of $g$ at $p \in I$ is defined to be the supremum of all $\epsilon > 0$ such that for all $\delta > 0$ there are $p - \delta < x,y < p + \delta$ such that $|g(x) - g(y)| \geq \epsilon$.
It follows by the triangle inequality that if the oscillation of $g$ at $p$ is at least $\epsilon$ then for all $\delta > 0$ there is a $p - \delta < q < p + \delta$ such that $|g(p) - g(q)| \geq \frac{1}{2}\epsilon$.
It is not difficult to show that, given $\epsilon >0$, the set of points at which $g$ has oscillation at least $\epsilon$ is closed.
We now prove a basic lemma:

\begin{Lem}\label{osc}
Let $g: I \to \bR$ be a definable function.
One of the following holds:
\begin{enumerate}
\item There is an open subinterval of $I$ on which $g$ is continuous.
\item There is an open interval $J \subseteq I$ and an $\epsilon > 0$ such that $g$ has oscillation at least $\epsilon$ at each point in $J$.
\end{enumerate}
\end{Lem}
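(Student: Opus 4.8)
The plan is to set up a dichotomy based on whether the closed oscillation sets of $g$ have nonempty interior, and to feed the negative branch into Lemma~\ref{family2}. For $\epsilon > 0$ let $S_\epsilon$ denote the set of points of $I$ at which $g$ has oscillation at least $\epsilon$. Using the description of oscillation given above, membership in $S_\epsilon$ is a first-order condition in $g$, $<$ and $+$ (the oscillation of $g$ at $p$ is the supremum of a definable set of reals, and suprema of definable sets are definable), so $\{S_\epsilon : \epsilon > 0\}$ is a definable family; it is decreasing since $S_{\epsilon'} \subseteq S_\epsilon$ whenever $\epsilon < \epsilon'$; and, as already noted, each $S_\epsilon$ is closed. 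Finally, $\bigcup_{\epsilon > 0} S_\epsilon$ is precisely the set of discontinuity points of $g$, since $g$ is continuous at $p$ exactly when its oscillation at $p$ is $0$.

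Now I would argue as follows. Suppose alternative (2) fails, so that there is no open interval $J \subseteq I$ and $\epsilon > 0$ with $J \subseteq S_\epsilon$; equivalently, each $S_\epsilon$ has empty interior. Being closed, each $S_\epsilon$ is then nowhere dense, and Lemma~\ref{family2}, applied to the decreasing definable family $\{S_\epsilon : \epsilon > 0\}$, shows that $\bigcup_{\epsilon > 0} S_\epsilon$ is nowhere dense. Consequently $I \setminus \cl\!\big(\bigcup_{\epsilon > 0} S_\epsilon\big)$ is a nonempty open subset of $I$, so it contains an open interval $J \subseteq I$. No point of $J$ lies in any $S_\epsilon$, so $g$ has oscillation $0$ at every point of $J$, i.e.\ $g$ is continuous on $J$; this is alternative (1). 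Thus at least one of (1), (2) holds.

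The argument is short once the pieces are in place, and the only real obstacle is the appeal to Lemma~\ref{family2}: the discontinuity set $\bigcup_{\epsilon > 0} S_\epsilon$ equals the countable union $\bigcup_{n \in \N} S_{1/n}$ of closed nowhere dense sets, so a priori it is only meager, and it is exactly here that the standing hypothesis that $\mathfrak{R}$ does not define $\cB$ is used to upgrade ``meager'' to ``nowhere dense''. The remaining points — that oscillation is a definable condition, that the $S_\epsilon$ are closed, and that positive oscillation characterizes discontinuity — are routine.
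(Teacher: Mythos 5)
Your argument is essentially identical to the paper's: both define the decreasing definable family of closed oscillation sets, note that their union is the discontinuity set, and apply Lemma~\ref{family2} in the case that every such set has empty interior. The only cosmetic difference is that you phrase the dichotomy as ``assume (2) fails'' rather than ``case split on whether some oscillation set has nonempty interior.''
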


\begin{proof}
For each $t > 0$ we let $A_t \subseteq I$ be the set of points at which $g$ has oscillation at least $t$.
Note that each $A_t$ is closed.
The set of points at which $g$ is discontinuous is exactly the union of the $A_t$.
If some $A_t$ has nonempty interior then $(2)$ above holds.
Suppose that each $A_t$ has empty interior and is therefore nowhere dense.
As $\{ A_t : t > 0\}$ is a decreasing definable family of sets it follows from Lemma~\ref{family2} that the union of the $A_t$ is nowhere dense.
Then $(1)$ holds.
\end{proof}
\noindent We now prove a weak monotonicity theorem for continuous $\mathfrak{R}$-definable functions.
\begin{Prop}\label{monotone}
Let $I \subseteq \bR$ be an open interval and $f: I \to \bR$ be a nonconstant continuous definable function.
Then there is an open subinterval of $I$ on which $f$ is strictly increasing or strictly decreasing.
\end{Prop}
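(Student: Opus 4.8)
The plan is to attach to $f$ a one–variable auxiliary function $g$ which is \emph{automatically} strictly monotone, and then run a dichotomy on how spread out the graph of $g$ is: in one case the inverse of $g$ turns out to equal $f$ on a subinterval and we are done, in the other case the jump discontinuities of $g$ produce a dense definably $\omega$-ordered set and Theorem~A applies.

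First I would normalize. Since $f$ is nonconstant and continuous, pick $a<b$ in $I$ with $f(a)\neq f(b)$; after shrinking $[a,b]$ and, if necessary, replacing $f$ by $c-f$ for a suitable constant $c$ (this is still definable, and it preserves both the hypothesis and the conclusion, interchanging ``increasing'' and ``decreasing''), I may arrange that $f(a)<f(x)$ for every $x\in(a,b]$; thus $f(a)=\min_{[a,b]}f$ and $f([a,b])=[f(a),M]$ where $M=\max_{[a,b]}f>f(a)$. Now define the leftmost–preimage function $g\colon[f(a),M]\to[a,b]$ by $g(v)=\min\bigl(f^{-1}(v)\cap[a,b]\bigr)$; it is definable. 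An intermediate–value argument gives $f(x)<v$ for all $x\in[a,g(v))$ together with $f(g(v))=v$, and from this $g$ is strictly increasing: if $v<v'$ but $g(v')<g(v)$, then $g(v')\in[a,g(v))$ forces $f(g(v'))<v$, contradicting $f(g(v'))=v'$.

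Now the dichotomy, on whether the image $g([f(a),M])$ is somewhere dense. If it is dense in some open interval $J\subseteq(a,b)$, then the restriction of the strictly increasing $g$ to the nondegenerate interval $g^{-1}(J)$ has dense image in $J$, and a strictly increasing map with dense image in an interval can have no jumps, hence is continuous on the interior $\tilde I$ of $g^{-1}(J)$; then $g(\tilde I)$ is an open subinterval of $J$ dense in $J$, so $g(\tilde I)\supseteq J$, and on $J$ the map $(g|_{\tilde I})^{-1}$ is continuous, strictly increasing, and equals $f$ (from $x=g(v)$ we get $f(x)=f(g(v))=v$). So $f$ is strictly increasing on $J$, as wanted. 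If instead $g([f(a),M])$ is nowhere dense, then — since a monotone function that is continuous on a subinterval has an interval as image there — $g$ must have a jump discontinuity densely in $[f(a),M]$; let $P$ be the definable set of jump points and, for $v\in P$, let $\lambda(v)=g(v^{+})-g(v^{-})>0$ be the jump size. The open intervals $\{(g(v^{-}),g(v^{+})):v\in P\}$ are pairwise disjoint subsets of the bounded set $[a,b]$, so for each $\delta>0$ only finitely many $v\in P$ satisfy $\lambda(v)\ge\delta$; ordering $P$ by $v\prec v'\iff\lambda(v)>\lambda(v')$, or $\lambda(v)=\lambda(v')$ and $v<v'$, gives a definable linear order on $P$ whose proper initial segments are all finite, i.e.\ of order type $\omega$. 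Since $P$ is dense in the open interval $(f(a),M)$, Theorem~A shows $\mathfrak R$ defines $\mathcal B$, contradicting the standing hypothesis of this section. Hence the second case is impossible and $f$ is strictly monotone on a subinterval.

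The only genuine idea here is introducing the leftmost–preimage function $g$: once it is on the table, its strict monotonicity is free and everything else is standard one–variable point–set topology of monotone functions. I expect the fiddliest bookkeeping (not a real obstacle) to be Case~A — carefully checking that dense image forces continuity of $g$ on a subinterval and that the inverse there genuinely is $f$ — together with remembering the opening normalization, which is what makes $g$ well defined in the first place and which accounts for the ``strictly decreasing'' alternative in the statement.
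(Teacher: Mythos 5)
Your proof is correct, and it takes a genuinely different route from the paper's. Both start from the same auxiliary function -- the leftmost-preimage map $g(v)=\min f^{-1}(v)$ -- but you first normalize (shrink to $[a,b]$ with $f(a)$ a strict one-sided minimum, possibly replacing $f$ by $c-f$) so that $g$ is \emph{strictly increasing} from the outset. This lets you argue directly with jump discontinuities of a monotone function: either the image of $g$ is somewhere dense, so $g$ has no jumps on a subinterval and its continuous inverse there is exactly $f$; or the image is nowhere dense, so jump points are dense, and ordering them by jump size (breaking ties by $<$) yields a definable dense set with a definable order of type $\omega$, contradicting Theorem A. This ``order by gap length'' step is the same device the paper uses in the proof of Theorem B (ordering endpoints of complementary intervals of a Cantor set by interval length), so you are getting Proposition \ref{monotone} from Theorem A via that idea rather than via the oscillation machinery. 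The paper instead keeps $g$ unnormalized, proves a general dichotomy for definable one-variable functions (Lemma \ref{osc}: either continuous on a subinterval or oscillation bounded below on a subinterval), which it derives from Lemma \ref{family2} about unions of decreasing definable families of nowhere dense sets, and then rules out the large-oscillation case by a bespoke sequential argument using that $g$ picks the minimum of each fiber. The net effect is that your version of Proposition \ref{monotone} does not rely on Lemma \ref{osc} (or, for this proposition, on Lemmas \ref{family1}--\ref{family2}) at all, at the cost of an extra normalization and of building the $\omega$-order by hand; the paper's version factors through the more general lemma, which one might expect to be reusable, though in fact Lemma \ref{osc} is only invoked here.

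Two small points worth making explicit if you write this up: (i) you should observe that $g^{-1}(J)$, being the preimage of an interval under a monotone map, is order-convex and infinite, hence a nondegenerate interval with nonempty interior $\tilde I$; and (ii) in the nowhere-dense case, you should note that the jump intervals $(g(v^{-}),g(v^{+}))$ are pairwise disjoint \emph{because} $g$ is increasing (for $v<v'$, any $w$ with $v<w<v'$ gives $g(v^{+})\le g(w)\le g(v'^{-})$), which is exactly what makes the ``at most finitely many jumps of size $\ge\delta$'' count go through. You gesture at both, but they are the load-bearing facts.
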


\begin{proof}
For technical reasons we let $I_0$ be a bounded open subinterval of $I$ whose closure is contained in $I$.
If $f$ is constant on every such interval then $f$ is constant.
We therefore assume that $f$ is nonconstant on $I_0$ and show that there is an open subinterval of $I_0$ on which $f$ is either strictly increasing or strictly decreasing.
It suffices to find an open subinterval of $I_0$ on which $f$ is a homeomorphism.
We assume that $f$ is nonconstant and find a subinterval on which $f$ is a homeomorphism.
Let $P$ be the image of $f$.
Then $P$ is an interval.
As $f$ is nonconstant $P$ has nonempty interior.
Let $g: P \to I_0$ be given by letting $g(p)$ be the minimal element of $f^{-1}(p)$.
This definition makes sense by continuity of $f$.
Then $g$ is an inverse of $f$. It suffices to find an open subinterval of $P$ on which $g$ is continuous.
We suppose towards a contradiction that there is no such subinterval.
Applying Lemma~\ref{osc} we fix an open subinterval $J \subseteq P$ and an $\epsilon > 0$ such that $g$ has oscillation at least $2\epsilon$ at every point in $J$.
Thus for every $p \in J$ and $\delta > 0$ there is a $p - \delta < q < p + \delta$ such that $|f(p) - f(q)| \geq \epsilon$.
As $I_0$ is bounded the image of the restriction of $g$ to $J$ is bounded.
We let $p \in J$ be such that
$$ \sup\{ g(q) : q \in J \} - g(p) < \epsilon. $$
Let $\{x_n\}_{n \in \mathbb{N}}$ be a sequence of elements of $J$ such that $x_n \to p$ as $n \to \infty$ and $|g(x_n) - g(p)| \geq \epsilon$ for every $n$.
This implies that $g(x_n) < g(p)$ and so $g(p) - g(x_n) \geq \epsilon$ for every $n$.
We declare $y_n = g(x_n)$ for all $n$.
As $I_0$ is bounded there is a subsequence of $\{y_n\}_{n \in \mathbb{N}}$ which converges to a point $y$.
As the closure of $I_0$ is contained in $I$ we have $y \in I$.
After restricting to a subsequence if necessary we suppose that $\{y_n\}_{n \in \mathbb{N}}$ converges to $y \in I$.
By definition of $g$ we have $f(y_n) = x_n$ for all $n$.
As $f$ is continuous we have:
$$ f(y) = \lim_{n \to \infty} f(y_n) = \lim_{n \to \infty} x_n = p. $$
As $g(p) - y_n \geq \epsilon$ for all $n$ we have $g(p) - y \geq \epsilon$.
Thus $g(p) > y$.
This is a contradiction as $f(y) = p$ and $g(p)$ is the minimum of $f^{-1}(p)$.
\end{proof}
\noindent The theorem below is an easy consequence of the previous proposition.
\begin{Thm}\label{mono}
 Let $I \subseteq \bR$ be an open interval and let $f: I \to \bR$ be a continuous definable function.
There is a definable open $U \subseteq I$ dense in $I$ such that $f$ is strictly increasing, strictly decreasing, or constant on each connected component of $U$.
\end{Thm}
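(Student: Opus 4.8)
The plan is to let $U$ be, informally, the set of points of $I$ near which $f$ is strictly monotone or constant, and to read off the three behaviours by a connectedness argument. Precisely, I would set $U_{+}$ to be the set of $x \in I$ admitting an open neighbourhood on which $f$ is strictly increasing, $U_{-}$ the set of $x \in I$ admitting an open neighbourhood on which $f$ is strictly decreasing, and $U_{0}$ the set of $x \in I$ admitting an open neighbourhood on which $f$ is constant. Each of $U_{+}, U_{-}, U_{0}$ is open, and each is definable since the defining conditions are expressible by first order formulas in the language of $\mathfrak{R}$ (using $<$, the graph of $f$, and quantification over $\bR$). They are pairwise disjoint, since $f$ cannot be simultaneously strictly increasing and strictly decreasing, or strictly monotone and constant, on any interval. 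I would then take $U := U_{+} \cup U_{-} \cup U_{0}$, which is open and definable, and it remains to verify that $U$ is dense in $I$ and that $f$ has the required form on each connected component of $U$.

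For density, I would fix an arbitrary open subinterval $J \subseteq I$ and show $U \cap J \neq \emptyset$. If $f$ is constant on $J$ then $J \subseteq U_{0}$. Otherwise $f|_{J}$ is a nonconstant continuous definable function, so Proposition~\ref{monotone} supplies an open subinterval $J' \subseteq J$ on which $f$ is strictly increasing or strictly decreasing, and then $J' \subseteq U_{+} \cup U_{-}$. Either way $U \cap J \neq \emptyset$, so $U$ is dense in $I$.

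For the behaviour on components, let $C$ be a connected component of $U$, so $C$ is an open interval. Since $U_{+}, U_{-}, U_{0}$ are pairwise disjoint open sets with union $U$, the connected set $C$ lies entirely in one of them. If $C \subseteq U_{+}$, then $f$ is strictly increasing on a neighbourhood of every point of $C$, and I would conclude that $f$ is strictly increasing on all of $C$ by the following standard chaining argument: given $a < b$ in $C$, cover the compact interval $[a,b]$ by finitely many open subintervals of $C$ on each of which $f$ is strictly increasing, reindex them so that consecutive intervals overlap (possible because $[a,b]$ is connected), and argue by induction that $f$ is strictly increasing on the union of the first $m$ of them, so in particular $f(a) < f(b)$. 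The cases $C \subseteq U_{-}$ and $C \subseteq U_{0}$ are handled identically and give, respectively, that $f$ is strictly decreasing on $C$ and that $f$ is constant on $C$.

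The only step requiring any care is the passage from local strict monotonicity to strict monotonicity on a connected component, and this is a purely topological fact with no reliance on definability; all of the content specific to $\mathfrak{R}$ — in particular the non-definability of $\cB$ — has already been absorbed into Proposition~\ref{monotone}.
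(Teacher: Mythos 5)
Your proof is correct and essentially the same as the paper's: both decompose $I$ into the three open definable sets of points near which $f$ is locally strictly increasing, locally strictly decreasing, or locally constant, use Proposition~\ref{monotone} for density, and then observe that pairwise disjointness of these open sets forces each connected component of $U$ to lie in exactly one of them. You are slightly more explicit than the paper about the final chaining step upgrading local strict monotonicity to global strict monotonicity on a component, which the paper leaves as an implicit ``it follows.''
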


\begin{proof}
We define several subsets of $I$.
\begin{enumerate}
\item Let $U_1 \subseteq I$ be the set of $p \in I$ such that $f$ is strictly increasing on $(p - \delta, p + \delta)$ for some $\delta > 0$.
\item Let $U_2 \subseteq I$  be the set of $p \in I$ such that $f$ is strictly decreasing on $( p - \delta , p + \delta)$ for some $\delta > 0$.
\item Let $U_3$ be the set of $p \in I$ such that $f$ is constant on $(p - \delta, p + \delta)$ for some $\delta > 0$.
\end{enumerate}
Each $U_i$ is open for $i \in \{1,2,3\}$ by definition.
Proposition~\ref{monotone} implies that every open subinterval of $I$ contains an open interval on which $f$ is strictly increasing, strictly decreasing or constant.
Thus $U_1 \cup U_2 \cup U_3$ is dense in $I$. No connected component of $U_i$ intersects any connected component of $U_j$ for distinct $i,j \in \{1,2,3\}$.
Therefore the connected components of $U_1 \cup U_2 \cup U_3$ are exactly the connected components of $U_1,U_2$ and $U_3$.
It follows that $f$ is strictly increasing, strictly decreasing or constant on each connected component of $U_1 \cup U_2 \cup U_3$.
\end{proof}
\noindent  We now show that continuous definable functions from $\bR$ to $\bR$ are generically locally Lipschitz.
\begin{Prop}\label{lip}
Let $I \subseteq \bR$ be an open interval and let $f
: I \to \bR$ be a nonconstant monotone continuous definable function.
Then $f$ is locally Lipschitz on an open dense subset of $I$.
If $f$ is strictly increasing or strictly decreasing then $f$ is locally bi-Lipschitz on a dense open subset of $I$.s
\end{Prop}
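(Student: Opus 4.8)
The plan is to reduce to the monotone case and exploit the fact that a monotone continuous function has an inverse, then run an argument parallel to Proposition~\ref{monotone} but tracking quantitative control of difference quotients rather than mere continuity. Concretely, suppose first that $f$ is strictly increasing (the decreasing case is symmetric, and the Lipschitz-only statement for a function which is merely monotone but not strictly so would have to be handled separately, but a monotone non-strict continuous function is constant on some subinterval, contradicting nonconstancy after passing to a subinterval where it is strictly monotone; so in fact after shrinking $I$ we may assume $f$ is strictly monotone). For each $\lambda>1$ let
\[
A_\lambda=\Bigl\{p\in I : \text{for all } \delta>0 \text{ there are } p-\delta<x<y<p+\delta \text{ with } f(y)-f(x)>\lambda(y-x)\Bigr\}.
\]
This set is closed, and its complement consists of points near which $f$ has an upper Lipschitz bound $\lambda$; the set of points at which $f$ is \emph{not} locally Lipschitz is the union over $\lambda\in\bN$ of the $A_\lambda$, and $\{A_\lambda:\lambda>1\}$ is a decreasing definable family. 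So by Lemma~\ref{family2} it suffices to show each $A_\lambda$ is nowhere dense, i.e.\ that $A_\lambda$ has empty interior.

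The heart of the matter is therefore: if $f$ were strictly increasing and had difference quotients exceeding $\lambda$ arbitrarily locally at every point of an open interval $J$, derive a contradiction. Here I would pass to the inverse. Let $g$ be the (continuous, strictly increasing) inverse of $f$ on the relevant interval. The failure of an upper Lipschitz bound $\lambda$ for $f$ near every point of $J$ translates into: for every point $q$ in $f(J)$ and every scale, there are points whose $g$-images are much closer together than the points themselves, i.e.\ $g$ fails to have a \emph{lower} Lipschitz bound $1/\lambda$ near $q$. One then wants to say that a continuous strictly increasing function which nowhere locally satisfies a lower Lipschitz estimate must, somewhere, be locally constant — which is absurd for an injective function. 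To make this precise I would mimic the extremal-point argument of Proposition~\ref{monotone}: on a bounded subinterval, pick a point $q_0$ at which $g(q_0)$ is within a small tolerance of the supremum of $g$ over the interval; the existence of nearby points with $g$-values dropping by a definite amount forces a point below $q_0$ with $g$-value too small relative to what monotonicity near that point allows, contradicting either monotonicity or the choice of $q_0$. This yields that $f$ is locally Lipschitz on a dense open set. For the bi-Lipschitz statement when $f$ is strictly monotone, apply the locally-Lipschitz result to $g$ as well (on the complementary system of intervals); the intersection of the two dense open sets — translated back via $f$ — gives a dense open set on which $f$ has both an upper and a lower local Lipschitz bound, hence is locally bi-Lipschitz.

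The main obstacle I anticipate is the extremal-point argument in the inverse function: unlike in Proposition~\ref{monotone}, where failure of continuity already produced a definite jump, here I must convert the failure of a \emph{Lipschitz} bound into a usable definite discrepancy, and care is needed because the "bad" behavior is about pairs of nearby points $x<y$, not about a single point and its limit. The cleanest route is probably to first replace $A_\lambda$ by the set of $p$ at which, for all $\delta$, there is a single $q$ with $|q-p|<\delta$ and $|f(q)-f(p)|>\tfrac{\lambda}{2}|q-p|$ (as in the oscillation reduction preceding Lemma~\ref{osc}, by the triangle inequality), so that everything is phrased in terms of $p$ and one nearby witness; then the extremal argument goes through essentially verbatim from Proposition~\ref{monotone}, with "$g$ has oscillation $\geq\epsilon$" replaced by "$g$ has a difference quotient $\leq 1/\lambda$ with a witness arbitrarily close". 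A secondary technical point is to make sure all the sets in play are genuinely $\mathfrak R$-definable so that Lemma~\ref{family2} applies — but since $f$ is definable and all the conditions are first-order over $(\bR,<,+,f)$ in the parameters $\lambda,\delta$, this is routine.
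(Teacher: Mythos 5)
Your approach has a genuine gap at its core. The union/intersection confusion is fixable (the set where $f$ is not locally Lipschitz is $\bigcap_\lambda A_\lambda$, not $\bigcup_\lambda A_\lambda$; and since your $A_\lambda$ are closed and decreasing, it would indeed suffice to show a single $A_\lambda$ has empty interior). The irreparable problem is the proposed extremal-point argument for showing $A_\lambda$ has empty interior. In Proposition~\ref{monotone}, the analytic part of the argument works because if $g$ has \emph{oscillation} $\ge \epsilon$ at every point of $J$, then near the near-supremum point one finds genuine jumps of size $\epsilon$, and continuity of $f$ converts this into a contradiction with $g(p)=\min f^{-1}(p)$. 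Failure of a Lipschitz bound produces no such definite jump: there exist continuous strictly increasing functions --- classical singular (Cantor-type) distribution functions, whose derivative vanishes Lebesgue-a.e.\ yet which are strictly increasing --- for which your $A_\lambda$ would be an entire closed interval for every $\lambda$. Such a function is injective, is nowhere locally constant, and still nowhere satisfies a local upper Lipschitz bound (equivalently, its inverse nowhere satisfies a local lower Lipschitz bound). So the claim ``a continuous strictly increasing function which nowhere locally satisfies a lower Lipschitz estimate must, somewhere, be locally constant'' is false, and no purely analytic extremal argument that ignores definability can rule this out. You must somewhere use the hypothesis that $\mathfrak R$ does not define $\mathcal B$; your write-up invokes Lemma~\ref{family2} only to pass from a single $A_\lambda$ to the union, which is the wrong place for the definability input.

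The paper's actual argument is essentially orthogonal to yours. It defines, for $\lambda,\delta \in \mathbb Q^>$, the closed definable sets $B_{\lambda,\delta}$ of points $x$ where a \emph{pointed} bound $|f(x)-f(y)| \le \lambda|x-y|$ holds for all $y$ with $|y-x|\le\delta$; every point of differentiability of $f$ lies in some $B_{\lambda,\delta}$, and by the Lebesgue differentiation theorem for monotone functions, $I \setminus \bigcup B_{\lambda,\delta}$ is Lebesgue null. The definability hypothesis then enters through Lemma~\ref{measure} (whose proof invokes Theorem~C and Steinhaus' theorem): each nowhere dense $B_{\lambda,\delta}\setminus\mathrm{Int}(B_{\lambda,\delta})$ is definable and hence Lebesgue null, so $U = \bigcup_{\lambda,\delta} \mathrm{Int}(B_{\lambda,\delta})$ has full measure and is therefore dense. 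One then checks $f$ is locally Lipschitz on each $\mathrm{Int}(B_{\lambda,\delta})$. The bi-Lipschitz case is handled by applying the same result to the inverse $g$ and intersecting, which is also where your proposal ends up, but the preceding step is measure-theoretic rather than an oscillation/extremal argument.

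Two smaller points. First, your reduction of ``monotone'' to ``strictly monotone'' by passing to a subinterval is not automatic: a continuous monotone function need not be strictly monotone on any open subinterval; the cleaner reduction is to observe $f$ is locally constant (hence trivially locally Lipschitz) on the open set where it is not injective, and handle the remaining part separately. Second, you should be careful that the definable sets you introduce are closed before invoking Lemma~\ref{family2}; the paper uses continuity of $f$ to ensure this for the $B_{\lambda,\delta}$.
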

\noindent  We first prove an Lemma which should of independent interest.
\begin{Lem}\label{measure}
A nowhere dense definable subset of $\bR$ is Lebesgue null.
\end{Lem}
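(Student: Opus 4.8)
The plan is to argue by contradiction: suppose $N \subseteq \bR$ is a nowhere dense definable set of positive Lebesgue measure, and produce a definable dense set carrying a definable order of type $\omega$, so that Theorem~A gives a definition of $\cB$, contradicting our standing assumption on $\mathfrak R$. After intersecting with a bounded interval and passing to the closure (the closure of a nowhere dense set is nowhere dense, and closure does not decrease measure), we may assume $N$ is closed, bounded, nowhere dense, and has positive measure; in particular $N$ has infinitely many complementary intervals. The idea is that a positive-measure nowhere dense closed set must have complementary intervals whose lengths shrink \emph{fast}, and this rate of decay gives us a handle to linearly order a dense definable subset in type $\omega$.

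First I would set up the data. Let $C \subseteq N$ be the set of right endpoints of bounded complementary intervals of $N$; as in the proof of Theorem~B, $(C,<)$ is a countable dense linear order, and $C$ is dense in $N$ (more precisely, in the perfect part of $N$, which is all of $N$ after discarding the finitely-or-countably many isolated points — better: replace $N$ by the closure of $C$, still nowhere dense, still positive measure since we only removed a countable set). Let $\tau : C \to \bR^{>}$ send each right endpoint to the length of the corresponding complementary interval; this is definable. The key point is a counting estimate: since $N$ is bounded, say $N \subseteq [0,1]$, and $\sum_{c \in C} \tau(c) = 1 - \mathrm{meas}(N) < 1$, for each $\delta > 0$ there are only finitely many $c \in C$ with $\tau(c) \geq \delta$. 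This is exactly the property exploited in Theorem~B. Then I order $C$ by declaring $a \prec b$ if $\tau(a) > \tau(b)$, or $\tau(a) = \tau(b)$ and $a > b$: this is a definable linear order, and every proper initial segment $\{a : a \prec b\}$ is contained in $\{c : \tau(c) \geq \tau(b)\}$, hence finite. So $(C,\prec)$ has order type $\omega$, and $C$ is dense in the interval where $N$ is dense — wait, $N$ is nowhere dense, so $C$ is not dense in any interval of $\bR$.

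This is where the argument needs the positive-measure hypothesis more seriously: density must be taken relative to the right space. The cleanest route is to reuse the observation from Theorem~B that $(C \cup L, <)$ — where $L$ is the set of points of $N$ lying in no closure of a complementary interval — is order-isomorphic to $(\bR,<)$, with $C$ a countable dense subset. But this only requires $N$ perfect, not positive-measure, and Theorem~B already concludes $\cB$ is defined from a Cantor set, which contradicts nothing here. So instead I would use positive measure to \emph{definably} transfer to an honest interval. Since $\mathrm{meas}(N \cap [0,x])$ is a definable, continuous, strictly increasing function of $x$ on the relevant range (strictly increasing because $N$ has positive measure in every subinterval it meets — which we may assume after shrinking, as $N$ has a point of density), it is a definable homeomorphism from a subinterval of $[0,1]$ onto an interval $J$, carrying $N$ onto a set $N'$ that is \emph{dense} in $J$ (the complementary intervals of $N$ are collapsed to points, so the complement of $N'$ in $J$ is countable, hence $N'$ is dense and co-dense in $J$). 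The image $C'$ of $C$ under this map is then a definable dense subset of $J$, and the pushed-forward order makes $(C', \prec)$ of order type $\omega$. Now Theorem~A applies to $(\bR, <, C', \prec)$ and shows $\mathfrak R$ defines $\cB$, the desired contradiction.

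The main obstacle I anticipate is justifying that $x \mapsto \mathrm{meas}(N \cap [0,x])$ is definable and well-behaved in an arbitrary expansion $\mathfrak R$ of $(\bR,<,+)$ — Lebesgue measure of a definable set need not be definable in such a reduct, and even the Steinitz-style "measure function" is delicate. The safe workaround, which I would actually carry out, is to avoid measure entirely in the transfer step and instead argue purely combinatorially: partition $[0,1]$ into the complementary intervals of $N$ together with $N$ itself, and build a definable "collapsing" map using only the order and the lengths $\tau(c)$, exploiting that the total length of complementary intervals is $< 1$ to control convergence; this keeps everything inside $(\bR,<,+)$-definability, where sums of a definable family over an initial segment of $(C,\prec)$ are definable because each such initial segment is finite. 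Once the definable dense image $C'$ with its type-$\omega$ order is in hand, the conclusion is immediate from Theorem~A. $\qed$
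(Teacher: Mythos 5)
Your route is genuinely different from the paper's, and it has a real gap at exactly the step you yourself flagged as the main obstacle. The paper does not go through Theorem~A at all; it goes through Theorem~C together with Steinhaus's theorem. After reducing to a closed nowhere dense $A$ of positive measure, it takes $D$ to be the (definable, discrete) set of midpoints of the bounded complementary intervals of $A$ and defines $\sigma : D \to A$ by $\sigma(x) = \min\{a \in A : x \le a\}$, so that $\sigma(D)$ is the set of right endpoints of complementary intervals and hence dense in $A$. By Steinhaus's theorem $A - A$ has nonempty interior because $A$ has positive measure; thus $(x,y) \mapsto \sigma(x) - \sigma(y)$ is a definable map from the discrete set $D^2$ onto a somewhere dense subset of $\bR$, contradicting Theorem~C. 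Positive measure is used only for the qualitative Steinhaus conclusion, so Lebesgue measure itself never has to be definable.

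Your Theorem~A route, by contrast, needs a definable set dense in an interval, and to produce one you push $C$ forward by the function $x \mapsto (\text{Lebesgue measure of } N \cap [0,x])$; you correctly worry that this need not be definable in $\mathfrak R$. The ``safe workaround'' you sketch does not fix this: evaluating the collapsing map at $x$ requires the total length of the complementary intervals contained in $[0,x]$, which is a sum over the \emph{infinite} $<$-initial segment $\{c \in C : c < x\}$, not over a finite $\prec$-initial segment, and $(\bR,<,+,N)$ has no device for expressing such infinite sums or their limits. So as written the proof does not go through; it would need either a genuinely new idea for the collapsing step or, as the paper does, a different target theorem. (You also considered and set aside a direct appeal to Theorem~B; that route is less hopeless than your remark suggests, since positive measure forces the perfect kernel of $A$ to be nonempty, but producing a \emph{definable} Cantor set inside $A$ takes care, because a single Cantor--Bendixson derivative need not remove all isolated points.)
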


\begin{proof}
It suffices to show that a definable closed nowhere dense $A \subseteq \bR$ is Lebesgue null.
We suppose that $A$ has positive measure.
Let $D$ be the set of midpoints of bounded complementary intervals of $A$ and let $\sigma : D \to A$ be given by
$$ \sigma(x) = \min\{ a \in A : x \leq a\}. $$
Then $\sigma(D)$ is the set of right endpoints of bounded complementary intervals of $A$ and is thus dense in $A$.
As $A$ has positive Lebesgue measure it follows from a classical theorem of Steinhaus \cite{Steinhaus} that $\{ x - y : x,y \in A\}$ has nonempty interior.
Thus the set $\{ \sigma(x) - \sigma(y) : x,y \in D\}$ is somewhere dense and so the function $\tau :  D^2 \to \bR$ given by $\tau(x,y) = \sigma(x) - \sigma(y)$ maps $D^2$ onto a somewhere dense subset of $\bR$.
This contradicts Theorem C.
\end{proof}

\noindent We do not know whether in general the statement \emph{``has Lebesgue measure zero''} can be replaced by \emph{``has Hausdorff dimension 0''}. However, if we assume that $\mathfrak R$ defines the function $x\mapsto r x$ for every $r\in \bR$, then indeed every definable subset of $\bR$ has Hausdorff dimension 0. This follows easily from the proof of Lemma \ref{measure} and Edgar and Miller \cite[Lemma 1]{edgarmiller}. Moreover, under this assumption higher dimensional analogues of this result can be obtained by using the techniques from \cite{Forna}. Even in this setting we do not know whether \emph{``has Lebesgue measure zero''} can be replaced by the stronger statement \emph{``has upper Minkowski dimension 0''}. While this strengthening of Lemma \ref{measure} holds for expansions of the real field by \cite{FHM}, we doubt that it holds for expansions of $(\bR,<,+)$ that do not define $\cB$.

\begin{proof}[Proof of Proposition~\ref{lip}]
We first suppose that $f$ is monotone and show that $f$ is locally Lipschitz on a dense open subset of $I$.
For each $\lambda, \delta \in \mathbb{Q}^>$ we let $B_{\lambda, \delta}$ be the set of $x \in I$ such that
\[
|f(x) - f(y)| \leq \lambda |x - y| \quad \text{for all $y\in I$ such that } x - \delta \leq y < x + \delta.
\]
Note that each $B_{\lambda, \delta}$ is definable.
It follows by continuity of $f$ that each $B_{\lambda,\delta}$ is closed.
Let $B$ be the union of the $B_{\lambda,\delta}$.
If $f$ is differentiable at $p$ then $p \in B$.
As $f$ is monotone the Lebesgue differentiability theorem implies that the set of $p \in I$ at which $f$ is not differentiable is Lebesgue null.
Thus $I \setminus B$ is Lebesgue null.
Let
$$U := \bigcup_{\lambda, \delta \in \mathbb{Q}^>} \interior{B_{\lambda,\delta}}.$$
By Lemma~\ref{measure} each $B_{\lambda,\delta} \setminus \interior{B_{\lambda,\delta}}$ is Lebesgue null.
It follows that $I \setminus U$ is Lebesgue null.
In particular $U$ is dense in $I$.
It is easy to see that $f$ is locally Lipschitz on each $\interior{B_{\lambda, \delta}}$, it follows that $f$ is locally Lipschitz on $U$.

We now suppose that $f$ is strictly increasing or strictly decreasing and show that $f$ is locally bi-Lipschitz on a dense open subset of $I$.
Let $J = f(I)$, note that $J$ is an interval.
As $f$ is strictly increasing or strictly decreasing $f$ is a homeomorphism $I \to J$.
Let $g : J \to I$ be the inverse of $f$.
There is a dense open subset $V$ of $J$ on which $g$ is locally Lipschitz.
Then $g(V)$ is open and dense in $I$ as $g$ is a homeomorphism.
It is easy to see that $U \cap g(V)$ is a dense open subset of $I$ on which $f$ is locally bi-Lipschitz.
\end{proof}

\noindent Proposition~\ref{lip} and Theorem~\ref{mono} together yield Theorem E.

\begin{thmE}
Let $\mathcal R$ be an expansion of $(\bR,<,+)$ that does not define $\cB$. Let $f: \bR \to \bR$ be a definable continuous function in $\mathcal R$. Then there is a definable open dense $U \subseteq \bR$ such that $f$ is strictly increasing, strictly decreasing, or constant on each connected component of $U$. Moreover, there is an open dense subset $V$ of $\bR$ such that $f$ is either constant or locally bi-Lipschitz on each connected component of $U$.
\end{thmE}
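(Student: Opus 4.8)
The plan is to deduce Theorem E directly from Theorem~\ref{mono} and Proposition~\ref{lip}, with only a routine gluing argument in between. For the first assertion I would simply invoke Theorem~\ref{mono} with $I = \bR$: since $f \colon \bR \to \bR$ is continuous and $\mathcal{R}$-definable and $\mathcal{R}$ does not define $\cB$, there is a definable open dense $U \subseteq \bR$ such that $f$ restricted to each connected component of $U$ is strictly increasing, strictly decreasing, or constant. This is verbatim the first conclusion we want.

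For the ``moreover'' part I would work component by component inside $U$. Each connected component $C$ of $U$ is an open interval, hence definable (with its endpoints as parameters), so $f|_C \colon C \to \bR$ is a continuous definable function which, by the previous paragraph, is strictly monotone or constant. If $f|_C$ is constant, set $V_C := C$. If $f|_C$ is strictly increasing or strictly decreasing, then $f|_C$ is a nonconstant monotone continuous definable function on the open interval $C$, so Proposition~\ref{lip} produces a dense open subset $V_C \subseteq C$ on which $f$ is locally bi-Lipschitz. Let $V$ be the union of the sets $V_C$ over all connected components $C$ of $U$.

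It then remains to check the two easy claims that $V$ is open and dense in $\bR$ and that $f$ is constant or locally bi-Lipschitz on each connected component of $V$. Openness is clear since each $V_C$ is open and the $C$ are pairwise disjoint. For density it suffices to note that $V$ is dense in $U$ --- any nonempty open subset of $U$ meets some $C$, and there it meets $V_C$ because $V_C$ is dense in $C$ --- whence $\overline{V} \supseteq \overline{U} = \bR$. Finally, because the components $C$ are pairwise disjoint open intervals separated by points outside $U$ (hence outside $V$), the connected components of $V$ are exactly the components $C$ on which $f$ is constant together with the connected components of the various $V_C$ on which $f$ is monotone; on the former $f$ is constant by construction, and on the latter $f$ is locally bi-Lipschitz since it is so on all of $V_C$.

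Since every step other than the two cited results is elementary, I do not expect a genuine obstacle; the only point requiring a little care is the bookkeeping that the connected components of the glued set $V$ are precisely the components of the pieces $V_C$, and that ``dense open in $\bR$'' follows by transitivity of density from ``dense open in $U$'' together with ``$U$ dense in $\bR$''. (I would also remark in passing that Lemma~\ref{measure}, used inside Proposition~\ref{lip}, is what yields the auxiliary statement that a definable subset of $\bR$ is somewhere dense or Lebesgue null.)
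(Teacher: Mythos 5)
Your argument is exactly the paper's: the authors state that Theorem~\ref{mono} and Proposition~\ref{lip} together yield Theorem E, and the component-by-component gluing you spell out is precisely the routine step they leave implicit. The proposal is correct.
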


\bibliographystyle{alpha}
\bibliography{monadicbib}
\end{document}